\newcommand{\scal}[2]{\langle #1,#2\rangle}
\begin{document}

\title{On constructions and properties of self-dual generalized bent functions
\thanks{
	The work is supported by Mathematical Center in Akademgorodok under agreement No. 075-15-2019-1613 with the Ministry of Science and Higher Education of the Russian Federation and Laboratory of Cryptography JetBrains Research.
	}
}


\author{Aleksandr Kutsenko}


\institute{A. Kutsenko \at
	Sobolev Institute of Mathematics, Novosibirsk \\
	Novosibirsk State University, Novosibirsk, Russia\\
	\email{alexandrkutsenko@bk.ru}           
}

\date{Received: date / Accepted: date}

\maketitle

\begin{abstract}
Bent functions of the form $\mathbb{F}_2^n\rightarrow\mathbb{Z}_q$, where $q\geqslant2$ is a positive integer, are known as generalized bent (gbent) functions. Gbent functions for which it is possible to define a dual gbent function are called regular. A regular gbent function is said to be self-dual if it coincides with its dual. In this paper we explore self-dual gbent functions for even $q$. We consider several primary and secondary constructions of such functions. It is proved that the numbers of self-dual and anti-self dual gbent functions coincide. We give necessary and sufficient conditions for the self-duality of Maiorana--McFarland gbent functions and find Hamming and Lee distances spectrums between them. We find all self-dual gbent functions symmetric with respect to two variables and prove that self-dual gbent function can not be affine. The properties of sign functions of self-dual gbent functions are considered. Symmetries that preserve self-duality are also discussed. 
\keywords{Generalized Boolean functions \and Self-dual bent \and Maiorana--McFarland generalized bent function \and Lee distance}
\end{abstract}

\section{Introduction}

The study of Boolean functions having strong cryptographic properties is the domain of current interest, see monographies~\cite{Carlet_book,Cusick} for detail. Boolean bent functions were introduced by Rothaus~\cite{Rothaus} in $1976$. Due to maximal nonlinearity they have a number of applications in cryptography and coding theory. They were used as building blocks of stream (Grain, $2004$) and block (CAST, $1997$) ciphers and, for instance, in $2000$ T.~Wada~\cite{Wada} established a connection between bent functions and binary constant-amplitude codewords. But despite the long history of research in this area there are still many open problems. Among them the exact number of bent functions as well as their complete classification seems elusive to be solved for now. One can find more details on bent functions in books~\cite{Tokareva_book,Mesnager_book}. 

Bent functions were initially generalized by P.~V.~Kumar in $1985$ by considering functions of the form~$\mathbb{Z}_q^n\rightarrow\mathbb{Z}_q$ with corresponding definition of bentness, see~\cite{Kumar}. Bent functions from a finite Abelian group into a finite Abelian group were studied in~\cite{Solodovnikov} by~V.~I.~Solodovnikov. Having applications of functions from~$\mathbb{F}_2^n$ to~$\mathbb{Z}_4$ in code-division multiple access (CDMA) systems, K.-U~Schmidt in~\cite{Schmidt} (initially appeared in preprint from~$2006$) generalized the notion of bentness for functions of the form $\mathbb{F}_2^n\rightarrow\mathbb{Z}_q$, where $q\geqslant2$ is a positive integer and studied these functions for the case $q=4$. The considered functions are named generalized bent (gbent) functions. Note that this generalization deals with the mappings of the form $\mathbb{F}_2^n\rightarrow\mathbb{Z}_q$ called generalized Boolean functions, that are also studied from the view of obtaining linear codes with special properties, see~\cite{Paterson}. In recent years generalized bent functions obtained much attention, in particular, for the case $q=2^k$. In~\cite{HodzicConstructions,Martinsen,Hodzic2018,Generalized_bent} different constructions and properties of generalized bent functions were obtained. The connection between concepts of strong regularity of (edge-weighted) Cayley graph associated to a generalized Boolean function and gbent functions was pointed in~\cite{Riera}. The complete characterization of generalized bent functions from different perspectives was recently presented in~\cite{Hodzic,Tang,Mesnager2018}. A comprehensive survey on existing generalizations of bent functions can be found in~\cite{Tokareva_survey}.

For every bent Boolean function its dual bent functions is uniquely defined. It is important to note that the duality mapping is the unique known isometric mapping of the set of bent functions into itself that cannot be extended to a isometry of the whole set of all Boolean functions that preserves bentness. Self-dual bent functions form a remarkable class of bent functions since they have the direct relation to their dual bent functions and in terms of mappings can be considered as fixed points of the duality mapping. These functions were explored by~C.~Carlet et al. in $2010$ in work~\cite{Self-dual}, where a number of constructions and properties were given and the classification for small number of variables was provided. Next steps for the classification of qubic self-dual bent functions in $8$ variables were made in~\cite{Feulner}, while quadratic self-dual bent functions were completely characterized in~\cite{Hou}. Constructions and properties of self-dual Boolean bent functions were studied in~\cite{Hyun,Luo,Mesnager}. The overview of the known metrical properties of self-dual bent functions can be found in~\cite{Kutsenko_PDM}.

The action of the duality mapping on bent functions within generalizations is increasingly nontrivial since it is typically defined only for the part of bent functions from corresponding generalization which are called {\it regular}, while more accurate work with them also demands for intermediate notation like {\it weak regularity} that also appears in this scope. The extension of the concept of self-duality for different generalizations of bent functions was made in several papers. The classification of quadratic self-dual bent functions of the form $\mathbb{F}_p^n\rightarrow\mathbb{F}_p$, $p$ odd prime, was made by X.-D.~Hou in~\cite{Hou_generalized}. In paper~\cite{Cesmelioglu} the self-duality for bent functions within the same generalization type was studied by A.~\c{C}e\c{s}melio\u{g}lu et al. In~$2018$, L.~Sok. et al.~\cite{Quaternary} studied quaternary self-dual bent functions of the form $\mathbb{F}_2^n\rightarrow\mathbb{Z}_4$ from the viewpoints of existence, construction, and symmetry. The relation between sign functions of quaternary self-dual bent function in~$n$ variables and two self-dual bent functions in~$n$ variables was found. Based on this it was proved that there are no quaternary self-dual bent functions in odd number of variables.

In current work we investigate constructions, symmetries and other properties of self-dual generalized bent functions $\mathbb{F}_2^n\rightarrow\mathbb{Z}_q$, when~$q$ is even. The paper is organized as follows. The next section contains the necessary notation. In section~\ref{section:Constructions} several primary and secondary constructions are given. The metrical properties of self-dual gbent functions from Maiorana--McFarland class are characterized in section~\ref{section:Hamming and Lee distance spectrums}. In section~\ref{section:Sign functions of (anti-)self-dual gbent functions} sign functions of self-dual gbent functions are studied. Section~\ref{section:Properties of self-dual gbent function} deals with the properties of self-dual gbent functions including upper bound for the set of self-dual gbent functions, the existence of affine functions within self-dual gbent ones and characterization of self-dual gbent functions symmetric with respect to two variables. In Section~\ref{section:Symmetries} the construction of mappings preserving self-duality of gbent function is given.

\section{Notation}\label{section:Notation}

Let $\mathbb{F}_2^n$ be a set of binary vectors of length $n$. For $x,y\in\mathbb{F}_2^n$ denote $\langle x,y\rangle=\bigoplus\limits_{i=1}^n{x_iy_i}$, where the sign $\oplus$ denotes a sum modulo $2$. Denote, following~\cite{Janusz}, the orthogonal group of index $n$ over the field $\mathbb{F}_2$ as
\begin{equation*}
	\mathcal{O}_n=\left\{L\in \text{GL}\left(n,\mathbb{F}_2\right):LL^T=I_n\right\},
\end{equation*}
where $L^\mathrm{T}$ denotes the transpose of $L$ and $I_n$ is an identical matrix of order $n$ over the field~$\mathbb{F}_2$.

A {\it generalized Boolean function} $f$ in $n$ variables is any map from $\mathbb{F}_2^n$ to $\mathbb{Z}_q$, the integers modulo $q$. The set of generalized Boolean functions in $n$ variables is denoted by $\mathcal{GF}^q_n$, for the case $q=2$ we use $\mathcal{F}_n$. Let $\omega=e^{2\pi i/q}$. A {\it sign} function of $f\in\mathcal{GF}_n^q$ is a complex valued function $F=\omega^{f}$, we will also refer to it as to a complex vector $\left(\omega^{f_0},\omega^{f_1},...,\omega^{f_{2^n-1}}\right)$ of length $2^n$, where $\left(f_0,f_1,...,f_{2^n-1}\right)$ is a vector of values of the function $f$. 

The {\it Hamming weight} $\mathrm{wt}_H(x)$ of the vector $x\in\mathbb{F}_2^n$ is the number of nonzero coordinates of $x$. The {\it Hamming distance} $\mathrm{dist}_H(f,g)$ between generalized Boolean functions $f,g$ in $n$ variables is the cardinality of the set $\left\{x\in\mathbb{F}_2^n|f(x)\ne g(x)\right\}$. The Lee weight of the element $x\in\mathbb{Z}_q$ is~$\mathrm{wt}_{L}(x)=\min\left\{x,q-x\right\}$. The Lee distance~$\mathrm{dist}_L(f,g)$ between $f,g\in\mathcal{GF}^q_n$ is
\begin{equation*}
	\mathrm{dist}_{L}(f,g)=\sum\limits_{x\in\mathbb{F}_2^n}\mathrm{wt}_{L}\left(\delta(x)\right),
\end{equation*}
where $\delta\in\mathcal{GF}_n^q$ and $\delta(x)=f(x)+(q-1)g(x)$ for any $x\in\mathbb{F}_2^n$. For Boolean case $q=2$ the Hamming distance coincides with the Lee distance.

The {\it (generalized) Walsh--Hadamard transform} of $f\in\mathcal{GF}_n^q$ is the complex valued function:
\begin{equation*}
	H_f(y)=\sum\limits_{x\in\mathbb{F}_2^n}\omega^{f(x)}(-1)^{\langle x,y\rangle}.
\end{equation*}

A generalized Boolean function $f$ in $n$ variables is said to be {\it generalized bent} (gbent) if
\begin{equation*}
	\left\lvert H_f(y)\right\rvert=2^{n/2},
\end{equation*}
for all $y\in\mathbb{F}_2^{n}$~\cite{Schmidt}. If there exists such $\widetilde{f}\in\mathcal{GF}^q_n$ that $H_f(y)=\omega^{\widetilde{f}(y)}2^{n/2}$ for any~$y\in\mathbb{F}_2^n$, the gbent function $f$ is said to be {\it regular} and $\widetilde{f}$ is called its {\it dual}. Note that $\widetilde{f}$ is generalized bent as well. A regular gbent function $f$ is said to be {\it self-dual} if $f=\widetilde{f}$, and {\it anti-self-dual} if $f=\widetilde{f}+q/2$. Consequently, it is the case only for even $q$. So throughout this paper we assume that~$q$ is a positive even integer. Corresponding sets of gbent functions are denoted by $\mathrm{SB}_q^+(n)$ and $\mathrm{SB}_q^-(n)$, respectively.

\section{Constructions}\label{section:Constructions}

In this section we present several primary and seconady constructions of self-dual gbent functions.

\subsection{Direct sum}

Suppose $n=n_1+n_2+\ldots+n_r$, where $n_k$ are positive integers for~$k=1,2,\ldots,r$. Let $f\in\mathcal{GF}_n^q$, consider gbent functions $f_k\in\mathcal{GF}_{n_k}^{q}$, $k=1,2,\ldots,r$. The function
\begin{equation*}
	f(x)=f_1\big(x^{(1)}\big)+f_2\big(x^{(2)}\big)+\ldots+f_r\big(x^{(r)}\big),
\end{equation*}
where $x^{(k)}\in\mathbb{F}_2^{n_k}$ and $x=\big(x^{(1)},x^{(2)},\ldots,x^{(r)}\big)\in\mathbb{F}_2^n$, is a {\it direct sum} of generalized Boolean functions $f_k$. Gbent functions obtained by a direct sum of generalized Boolean functions were studied in paper~\cite{HodzicConstructions}, it was proved that the function $f$ is gbent if and only if all $f_k$ are gbent functions. Here we consider self-dual bent functions obtained by this construction.
\begin{proposition}
	Assume all numbers $p_k$ are even and $f_k\in\mathcal{GF}_{n_k}^{q}$ are gbent functions such that $\widetilde{f}_k=f_k+c_k\left(q/2\right)$, where $c_k\in\mathbb{F}_2$, $k=1,2,\ldots,r$. If there is an even number of nonzero coefficients $c_k$, then the function $f$ is a self-dual gbent function in $n$ variables.
\end{proposition}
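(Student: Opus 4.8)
The plan is to use the fact that the Walsh--Hadamard transform sends a direct sum to a product of transforms, which identifies the dual of a direct sum with the direct sum of the duals; self-duality of $f$ then becomes a parity condition on the coefficients $c_k$.

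First I would record the factorization of the transform. Writing $y=\big(y^{(1)},\ldots,y^{(r)}\big)$ with $y^{(k)}\in\mathbb{F}_2^{n_k}$ and using $\langle x,y\rangle=\bigoplus_{k=1}^{r}\langle x^{(k)},y^{(k)}\rangle$, the defining sum over $x=\big(x^{(1)},\ldots,x^{(r)}\big)$ splits into a product of independent sums:
\begin{equation*}
	H_f(y)=\sum_{x\in\mathbb{F}_2^n}\omega^{f(x)}(-1)^{\langle x,y\rangle}=\prod_{k=1}^{r}\left(\sum_{x^{(k)}\in\mathbb{F}_2^{n_k}}\omega^{f_k(x^{(k)})}(-1)^{\langle x^{(k)},y^{(k)}\rangle}\right)=\prod_{k=1}^{r}H_{f_k}\big(y^{(k)}\big).
\end{equation*}
Since each $f_k$ is gbent, $f$ is gbent as well by the result of~\cite{HodzicConstructions}, so only regularity of $f$ and the identity $f=\widetilde f$ remain to be established.

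Next I would insert the regularity of the summands: $H_{f_k}\big(y^{(k)}\big)=\omega^{\widetilde f_k(y^{(k)})}\,2^{n_k/2}$ for every $k$, where, because all $n_k$ are even, the factors $2^{n_k/2}$ are integers whose product equals $2^{(n_1+\cdots+n_r)/2}=2^{n/2}$. Multiplying these identities yields
\begin{equation*}
	H_f(y)=\omega^{\,\widetilde f_1(y^{(1)})+\cdots+\widetilde f_r(y^{(r)})}\,2^{n/2},
\end{equation*}
so $f$ is regular and its dual is the direct sum of the duals, $\widetilde f(y)=\widetilde f_1\big(y^{(1)}\big)+\cdots+\widetilde f_r\big(y^{(r)}\big)$.

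Finally I would substitute $\widetilde f_k=f_k+c_k(q/2)$ and collect terms modulo $q$:
\begin{equation*}
	\widetilde f(y)=\sum_{k=1}^{r}\left(f_k\big(y^{(k)}\big)+c_k\tfrac{q}{2}\right)=f(y)+\left(\sum_{k=1}^{r}c_k\right)\tfrac{q}{2}.
\end{equation*}
Regarding each $c_k\in\mathbb{F}_2$ as the integer $0$ or $1$, the assumption that an even number of them are nonzero means $\sum_{k}c_k$ is even, hence $\big(\sum_{k}c_k\big)(q/2)\equiv0\pmod q$ and $\widetilde f=f$, i.e. $f\in\mathrm{SB}_q^+(n)$. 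I do not expect a genuine obstacle here; the only points needing care are the index bookkeeping in the splitting of the transform and carrying out the exponent arithmetic modulo $q$, which is precisely what makes the \emph{parity} of $\sum_k c_k$ — rather than its value — the decisive quantity. The same computation with an odd number of nonzero $c_k$ gives $\widetilde f=f+q/2$, that is, an anti-self-dual $f$, which already hints at the symmetry between $\mathrm{SB}_q^+(n)$ and $\mathrm{SB}_q^-(n)$ announced in the abstract.
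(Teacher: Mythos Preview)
Your proof is correct and follows essentially the same route as the paper: factorize $H_f$ as $\prod_k H_{f_k}$, use regularity of each $f_k$ to read off $\widetilde f=\sum_k\widetilde f_k$, and then substitute $\widetilde f_k=f_k+c_k(q/2)$ to reduce self-duality to the parity of $\sum_k c_k$. Your remark that even $n_k$ make the $2^{n_k/2}$ integers is harmless but not actually needed, since $\prod_k 2^{n_k/2}=2^{n/2}$ in any case.
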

\begin{proof}
	The Walsh--Hadamard transform of $f$ which is a direct sum of $f_k\in\mathcal{GF}_{n_k}^{q}$, $k=1,2,\ldots,r$, is given by
	\begin{align*}
		H_f(y)&=\sum\limits_{x\in\mathbb{F}_2^n}\omega^{f(x)}(-1)^{\langle x,y\rangle}=H_{f_1}\big(y^{(1)}\big)H_{f_2}\big(y^{(2)}\big)\cdot\ldots\cdot H_{f_r}\big(y^{(r)}\big)\\
		&=(-1)^{c_1+c_2+\ldots+c_r}2^{\big(n_1+n_2+\ldots+n_r\big)/2}\omega^{\widetilde{f}_1\big(y^{(1)}\big)+\widetilde{f}_2\big(y^{(2)}\big)+\ldots+\widetilde{f}_r\big(y^{(r)}\big)}\\
		&=(-1)^{c_1+c_2+\ldots+c_r}2^{n/2}\omega^{f_1\big(y^{(1)}\big)+f_2\big(y^{(2)}\big)+\ldots+f_r\big(y^{(r)}\big)}\\
		&=2^{n/2}\omega^{f(y)+(q/2)\big(c_1+c_2+\ldots+c_r\big)}
	\end{align*}
	for all $y^{(k)}\in\mathbb{F}_2^{n_k}$ and $y=\big(y^{(1)},y^{(2)},\ldots,y^{(r)}\big)\in\mathbb{F}_2^n$.
\end{proof}

\subsection{Maiorana--McFarland class}

Bent functions in $2k$ variables which have a representation
\begin{equation*}
	f(x,y)=\langle x,\pi(y)\rangle\oplus g(y),\ \ x,y\in\mathbb{F}_2^k,
\end{equation*}
where $\pi:\mathbb{F}_2^k\rightarrow\mathbb{F}_2^k$ is a permutation and $g$ is a Boolean function in $k$ variables, form the well known {\it Maiorana--McFarland} class of bent functions~\cite{McFarland}. It is known~\cite{Carlet} that the dual of a Maiorana--McFarland bent function $f(x,y)$ is equal to
\begin{equation*}
	\widetilde{f}(x,y)=\big\langle \pi^{-1}(x),y\big\rangle\oplus g\big(\pi^{-1}(x)\big), \ \ x,y\in\mathbb{F}_2^k.
\end{equation*}

A generalization of this construction for the case $q=4$ was given by Schmidt in~\cite{Schmidt}. In paper~\cite{Generalized_bent} this construction was given for any even $q$, thus, forming the following construction
\begin{equation*}
	f(x,y)=\frac{q}{2}\langle x,\pi(y)\rangle+g(y),\ \ x,y\in\mathbb{F}_2^k,
\end{equation*}
where $\pi:\mathbb{F}_2^k\rightarrow\mathbb{F}_2^k$ is a permutation and $g$ is a generalized Boolean function in $k$ variables. Its dual is
\begin{equation*}
	\widetilde{f}(x,y)=\frac{q}{2}\big\langle \pi^{-1}(x),y\big\rangle+g\big(\pi^{-1}(x)\big),\ \ x,y\in\mathbb{F}_2^k.
\end{equation*}
In the article~\cite{Self-dual} necessary and sufficient conditions of (anti-)self-duality of Maiorana--McFarland bent functions, denoted by $\mathrm{SB}_{\mathcal{M}}^+(n)$ ($\mathrm{SB}_{\mathcal{M}}^-(n)$), were given. In~\cite{Quaternary} quaternary self-dual Maiorana--McFarland bent functions were studied and necessary and sufficient conditions of self-duality were obtained for them. 

In the current work we generalize these results for any even $q$. Denote the sets of (anti-)self-dual generalized Maiorana--McFarland bent functions by $\mathrm{SB}_{\mathcal{M}^q}^+(n)$ ($\mathrm{SB}_{\mathcal{M}^q}^-(n)$ correspondingly).

\begin{theorem}\label{generalized_MM_self-duality}
	A generalized Maiorana--McFarland bent function
	\begin{equation*}
		f(x,y)=\frac{q}{2}\left\langle x,\pi(y)\right\rangle+g(y),\ x,y\in\mathbb{F}_2^{n/2},
	\end{equation*}
	is (anti-)self-dual bent if and only if for any $y\in\mathbb{F}_2^{n/2}$
	\begin{equation*}
		\pi(y)=L\left(y\oplus b\right),\ \ \
		g(y)=\frac{q}{2}\left\langle b,y\right\rangle+d,
	\end{equation*}
	where $L\in\mathcal{O}_{n/2},b\in\mathbb{F}_2^{n/2}$, $\mathrm{wt}\left(b\right)$ is even (odd), $d\in\mathbb{Z}_q$.
\end{theorem}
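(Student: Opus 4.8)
The plan is to reduce the (anti-)self-duality condition to a single functional identity on the pair $(\pi,g)$ and then to read off the structure of $\pi$ from it. Write $k:=n/2$ and let $c\in\{0,1\}$, with $c=0$ in the self-dual case and $c=1$ in the anti-self-dual case; thus $f$ is (anti-)self-dual if and only if $f\equiv\widetilde f+c(q/2)\pmod q$ pointwise. Using the dual formula for generalized Maiorana--McFarland functions recalled just above and substituting $x\mapsto\pi(x)$ (legitimate since $\pi$ is a bijection), this condition becomes
\begin{equation*}
	g(x)-g(y)\equiv\frac q2\left(\langle\pi(x),\pi(y)\rangle\oplus\langle x,y\rangle\oplus c\right)\pmod q\qquad\text{for all }x,y\in\mathbb{F}_2^{k}.
\end{equation*}
Here and below I use the elementary identities $\tfrac q2 a+\tfrac q2 b\equiv\tfrac q2(a\oplus b)\pmod q$ and $-\tfrac q2 a\equiv\tfrac q2 a\pmod q$ for $a,b\in\{0,1\}$, which let one pass freely between addition of $q/2$-multiples in $\mathbb{Z}_q$ and XOR of bits.

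I would then specialise the identity at $y=\mathbf 0$. Writing $d:=g(\mathbf 0)$ and $b':=\pi(\mathbf 0)$, it gives $g(x)=d+\tfrac q2\left(\langle\pi(x),b'\rangle\oplus c\right)$; in particular $g$ takes only the values $d$ and $d+q/2$. Evaluating this at $x=\mathbf 0$ and using $\langle b',b'\rangle=\mathrm{wt}(b')\bmod 2$ forces $\mathrm{wt}(\pi(\mathbf 0))\equiv c\pmod 2$. Substituting the resulting form of $g$ back into the functional identity and introducing $\sigma(y):=\pi(y)\oplus b'$ (so that $\sigma(\mathbf 0)=\mathbf 0$), all the $b'$-dependent cross terms cancel and the identity collapses to
\begin{equation*}
	\langle\sigma(x),\sigma(y)\rangle=\langle x,y\rangle\qquad\text{for all }x,y\in\mathbb{F}_2^{k}.
\end{equation*}

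The conceptual core of the argument, and the step I expect to be the main obstacle, is to deduce from this that $\sigma$ is in fact linear and given by an orthogonal matrix. Evaluating on the standard basis gives $\langle\sigma(e_i),\sigma(e_j)\rangle=\delta_{ij}$, so the matrix $M$ with columns $\sigma(e_1),\dots,\sigma(e_k)$ satisfies $M^{\mathrm T}M=I_k$, whence $M\in\mathcal{O}_k$. For arbitrary $x$, the relations $\langle\sigma(x),\sigma(e_i)\rangle=\langle x,e_i\rangle=x_i$ say precisely that $M^{\mathrm T}\sigma(x)=x$, i.e. $\sigma(x)=(M^{\mathrm T})^{-1}x=Mx$. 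Putting $L:=M\in\mathcal{O}_k$ and $b:=L^{\mathrm T}b'$ we obtain $\pi(y)=Ly\oplus b'=L(y\oplus b)$, and the formula for $g$ simplifies to $g(y)=\tfrac q2\langle b,y\rangle+d$ after using $\langle Ly,b'\rangle=\langle y,b\rangle$ and $\langle b',b'\rangle=\langle b,b\rangle$. Since $\mathrm{wt}(b)\equiv\mathrm{wt}(b')\pmod 2$, the parity of $b$ equals $c$, which settles the necessity direction.

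For sufficiency I would substitute $\pi(y)=L(y\oplus b)$ and $g(y)=\tfrac q2\langle b,y\rangle+d$ into both $f$ and its dual $\widetilde f(x,y)=\tfrac q2\langle\pi^{-1}(x),y\rangle+g(\pi^{-1}(x))$, using $\pi^{-1}(x)=L^{\mathrm T}x\oplus b$, and compute $f-\widetilde f$ modulo $q$. Collecting all $q/2$-multiples with the XOR identity, every term depending on $x$ or $y$ cancels and what survives is $\tfrac q2\langle b,b\rangle=\tfrac q2\left(\mathrm{wt}(b)\bmod 2\right)$; hence $f\equiv\widetilde f+c(q/2)$ exactly when $\mathrm{wt}(b)$ has parity $c$, as claimed. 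Apart from this concluding bookkeeping, the only delicate point throughout is keeping the mixed $\bmod\,q$ / $\bmod\,2$ arithmetic consistent; the linear-algebra characterisation of $\sigma$ is the one genuinely substantial step.
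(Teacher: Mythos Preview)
Your proof is correct and complete; in fact you also supply the sufficiency verification, which the paper leaves implicit. The route, however, is genuinely different from the paper's.

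The paper keeps the duality identity in the original form (involving $\pi^{-1}$), specialises at $x=\mathbf 0$ to extract $g$, substitutes back, and then argues that since the left-hand side has algebraic degree at most~$1$ in~$x$ for each fixed~$y$ while the right-hand side has degree at most~$1$ in~$y$ for each fixed~$x$, the permutation $\pi$ must be affine; only after writing $\pi(y)=L(y\oplus b)$ does it go on to show $L\in\mathcal{O}_{n/2}$ by further substitutions. You instead pre-compose with $\pi$ to obtain the symmetric identity $g(x)-g(y)\equiv\tfrac q2\bigl(\langle\pi(x),\pi(y)\rangle\oplus\langle x,y\rangle\oplus c\bigr)$, specialise at $y=\mathbf 0$, and reduce everything to the single condition $\langle\sigma(x),\sigma(y)\rangle=\langle x,y\rangle$ for the translate $\sigma=\pi\oplus\pi(\mathbf 0)$. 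Your linear-algebra step then yields linearity and orthogonality of $\sigma$ simultaneously: evaluating on the standard basis gives $M^{\mathrm T}M=I$, and the relations $\langle\sigma(x),\sigma(e_i)\rangle=x_i$ force $\sigma(x)=Mx$. This bypasses the paper's degree argument entirely and is arguably cleaner, since it never needs to invoke the notion of algebraic degree of a generalized Boolean function and gets $L\in\mathcal{O}_{n/2}$ for free rather than as a separate deduction. The paper's approach, on the other hand, stays closer to the classical Boolean treatment of Maiorana--McFarland self-duality and mirrors the arguments used there.
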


\begin{proof}
	Let $f(x,y)=\frac{q}{2}\left\langle x,\pi(y)\right\rangle\oplus g(y)$, where $\pi$ is a permutation on $\mathbb{F}_2^{n/2}$, $g\in\mathcal{GF}_{n/2}^q$, $x,y\in\mathbb{F}_2^{n/2}$. By the definition of (anti-)self-duality a generalized bent function is (anti-)self-dual if it coincides with (the complement of) its dual. Then for all $x,y\in\mathbb{F}_2^n$ it must hold
	\begin{equation}\label{MM function equals its dual}
		\frac{q}{2}\left\langle x,\pi(y)\right\rangle+g(y)=\frac{q}{2}\big\langle \pi^{-1}(x),y\big\rangle+g\big(\pi^{-1}(x)\big)+\frac{q}{2}c,
	\end{equation}
	where $c\in\mathbb{F}_2$: $c=0$ if $f=\widetilde{f}$ and $c=1$ if $f=\widetilde{f}+\frac{q}{2}$.
	
	Put zero vector $x=\bm{0}\in\mathbb{F}_2^{n/2}$ in~$(\ref{MM function equals its dual})$, then for any $y\in\mathbb{F}_2^n$ we have
	\begin{equation*}
		g(y)=\frac{q}{2}\big\langle \pi^{-1}\left(\bm{0}\right),y\big\rangle+g\big(\pi^{-1}\left(\bm{0}\big)\right)+\frac{q}{2}c.
	\end{equation*}
	
	The condition~$(\ref{MM function equals its dual})$ can be transformed to
	\begin{multline*}
		\frac{q}{2}\langle x,\pi(y)\rangle+\frac{q}{2}\big\langle \pi^{-1}\left(\bm{0}\right),y\big\rangle+g\big(\pi^{-1}\left(\bm{0}\right)\big)\\
		=\frac{q}{2}\big\langle \pi^{-1}(x),y\big\rangle+\frac{q}{2}\big\langle \pi^{-1}\left(\bm{0}\right),\pi^{-1}(x)\big\rangle+g\big(\pi^{-1}\left(\bm{0}\right)\big)+\frac{q}{2}c,
	\end{multline*}
	or, equivalently,
	\begin{equation}\label{MM function equals its dual1}
		\frac{q}{2}\langle x,\pi(y)\rangle+\frac{q}{2}\big\langle \pi^{-1}\left(\bm{0}\right),y\big\rangle=\frac{q}{2}\big\langle \pi^{-1}(x),y\big\rangle+\frac{q}{2}\big\langle \pi^{-1}\left(\bm{0}\right),\pi^{-1}(x)\big\rangle+\frac{q}{2}c.
	\end{equation}
	In both sides of~$(\ref{MM function equals its dual1})$ monomials of algebraic degree more than $2$ can not occur, since the left part has algebraic degree at most $1$ with respect to $x$ provided that $y$ is fixed and the right part has algebraic degree at most $1$ with respect to $y$ provided that $x$ is fixed. Therefore, the mapping $\pi$ is an affine permutation defined by $\pi(x)=L\left(x\oplus b\right)$ for any $x\in\mathbb{F}_2^n$, where $L$ is a $\left({n/2}\right)\times\left({n/2}\right)$ nonsingular binary matrix, $b\in\mathbb{F}_2^{n/2}$.
	
	Since the equality~$(\ref{MM function equals its dual1})$ should be considered by modulo $q$, we only care about the parity of components of both sides, thus, for any $x,y\in\mathbb{F}_2^{n/2}$ having the following equality
	\begin{equation}\label{MM function equals its dual2}
		\left\langle x,L\left(y\oplus b\right)\right\rangle\oplus\left\langle b,y\right\rangle=\big\langle L^{-1}x\oplus b,y\big\rangle\oplus\big\langle b,L^{-1}x\oplus b\big\rangle\oplus c.
	\end{equation}
	
	Putting $x\in\mathbb{F}_2^{n/2}$ to be zero vector in~$(\ref{MM function equals its dual2})$, then for any $y\in\mathbb{F}_2^{n/2}$ it must hold $\left\langle b,b\right\rangle=c$. Rewrite~$(\ref{MM function equals its dual2})$ in the form
	\begin{equation*}
		\big\langle x,Ly\oplus \big(L^{-1}\big)^Ty\big\rangle=\big\langle x,Lb\oplus\big(L^{-1}\big)^Tb\big\rangle,
	\end{equation*}
	and consider it for a zero vector $y=\bm{0}=\in\mathbb{F}_2^{n/2}$:
	\begin{equation*}
		\big\langle x,Lb\oplus\big(L^{-1}\big)^Tb\big\rangle=0,
	\end{equation*}
	that is $Lb\oplus\left(L^{-1}\right)^Tb=\bm{0}$ or, equivalently, $Lb=\left(L^{-1}\right)^Tb$. It means that 
	\begin{equation*}
		\big\langle x,\big(L\oplus \big(L^{-1}\big)^T\big)y\big\rangle=0,
	\end{equation*}
	for any $x,y\in\mathbb{F}_2^{n/2}$. From this it follows that $L^{-1}=L^T$, that is $L\in\mathcal{O}_{n/2}$.
\end{proof}

It follows that the number of such functions is a function of $q$ and the cardinality of the orthogonal group.

\begin{corollary}
	It holds
	\begin{equation*}
		\big\vert\mathrm{SB}^{+}_{\mathcal{GM}^q}(n)\big\vert=\big\vert\mathrm{SB}^{-}_{\mathcal{GM}^q}(n)\big\vert=q\cdot2^{n/2-1}\left\vert\mathcal{O}\left(n/2,\mathbb{F}_2\right)\right\vert.
	\end{equation*}
\end{corollary}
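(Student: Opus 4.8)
The plan is to read off from Theorem~\ref{generalized_MM_self-duality} an explicit parametrization of the sets $\mathrm{SB}^{+}_{\mathcal{GM}^q}(n)$ and $\mathrm{SB}^{-}_{\mathcal{GM}^q}(n)$, verify that this parametrization is injective, and then simply count the parameters. By Theorem~\ref{generalized_MM_self-duality}, a generalized Maiorana--McFarland bent function lies in $\mathrm{SB}^{+}_{\mathcal{GM}^q}(n)$ if and only if it can be written as $f(x,y)=\frac{q}{2}\langle x,L(y\oplus b)\rangle+\frac{q}{2}\langle b,y\rangle+d$ with $L\in\mathcal{O}_{n/2}$, $b\in\mathbb{F}_2^{n/2}$ of even Hamming weight, and $d\in\mathbb{Z}_q$; the set $\mathrm{SB}^{-}_{\mathcal{GM}^q}(n)$ admits the same description with $\mathrm{wt}(b)$ odd. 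Thus each of these two sets is the image of the map $\Phi$ sending a parameter triple $(L,b,d)$ to the corresponding function $f$, and it suffices to show $\Phi$ is a bijection onto its image and to enumerate the admissible triples.

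For injectivity, given $f=\Phi(L,b,d)$ I would recover the three parameters one at a time from suitable restrictions of $f$. Evaluating at the zero vectors gives $d=f(\bm 0,\bm 0)$. Next, the restriction $y\mapsto f(\bm 0,y)=\frac{q}{2}\langle b,y\rangle+d$ is a $\mathbb{Z}_q$-valued affine function whose linear part takes values in $\{0,q/2\}$, so its coefficient vector $b$ is uniquely determined. Finally, for each fixed $y$ the restriction $x\mapsto f(x,y)-\frac{q}{2}\langle b,y\rangle-d=\frac{q}{2}\langle x,L(y\oplus b)\rangle$ determines the vector $L(y\oplus b)$, and as $y$ ranges over $\mathbb{F}_2^{n/2}$ the argument $y\oplus b$ ranges over all of $\mathbb{F}_2^{n/2}$, so $L$ is recovered. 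Hence $\Phi$ is injective, and therefore a bijection from the set of admissible triples onto $\mathrm{SB}^{+}_{\mathcal{GM}^q}(n)$ (respectively $\mathrm{SB}^{-}_{\mathcal{GM}^q}(n)$).

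It then remains to count. There are $\lvert\mathcal{O}(n/2,\mathbb{F}_2)\rvert$ choices for $L$, exactly $2^{n/2-1}$ vectors of $\mathbb{F}_2^{n/2}$ of even weight, and $q$ choices for $d$, giving $\lvert\mathrm{SB}^{+}_{\mathcal{GM}^q}(n)\rvert=q\cdot2^{n/2-1}\lvert\mathcal{O}(n/2,\mathbb{F}_2)\rvert$. The same computation with odd-weight $b$ yields the value for $\mathrm{SB}^{-}_{\mathcal{GM}^q}(n)$, and since $\sum_{k\text{ even}}\binom{n/2}{k}=\sum_{k\text{ odd}}\binom{n/2}{k}=2^{n/2-1}$ the two cardinalities coincide; alternatively, the map $b\mapsto b\oplus e_1$ exhibits a bijection between even- and odd-weight vectors and hence between the two function sets. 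I expect the only point that needs genuine care is the injectivity of $\Phi$ — specifically the observation that the single slice $x=\bm 0$ together with the family of slices with $y$ fixed suffices to recover all of $d$, $b$, and $L$; the remaining parts are elementary counting.
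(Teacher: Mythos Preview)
Your proof is correct and follows the same line as the paper: the corollary is stated there as an immediate consequence of Theorem~\ref{generalized_MM_self-duality}, with no further argument given, so you have simply supplied the obvious parameter count together with a careful injectivity check that the paper leaves implicit. The injectivity argument you give (recovering $d$, then $b$, then $L$ from successive restrictions) is clean and correct.
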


\subsection{Dillon functions type}

In paper~\cite{Martinsen} an explicit representation of functions in a generalization of Dillon’s~$\mathcal{PS}_{ap}$ class to gbent functions with $q=2^k$ was presented. By comparing the function from~$\mathcal{PS}_{ap}$ in a bivariate form with its dual we obtain the following result.
\begin{proposition}
	Assume $G_j$, $j=0,1,...,k-1$, be balanced Boolean functions in $m$ variables with $G_j(0)=0$ and $\sum\limits_{t\in\mathbb{F}_{2^m}}\omega^{\sum\limits_{j=0}^{k-1}2^jG_j(t)}=0$. Then, if $G_j(u)=G_j(1/u)$ for any $u\in\mathbb{F}_{2^m}$ (with the convention $1/0=0$), then the function $f:\mathbb{F}_{2^m}\times\mathbb{F}_{2^m}\rightarrow\mathbb{Z}_{2^k}$ given by
	\begin{equation}\label{equation:Dillon function}
		f(x,y)=\sum\limits_{j=0}^{k-1}2^jG_j(x/y),\ \ x,y\in\mathbb{F}_2^{n/2},
	\end{equation}
	is self-dual gbent in $2m$ variables.
\end{proposition}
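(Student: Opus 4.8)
The plan is to compute the generalized Walsh--Hadamard transform of $f$ directly, read off its dual, and then match the dual against $f$ itself. First I would fix the usual identification of $\mathbb{F}_2^{n/2}=\mathbb{F}_2^{m}$ with the field $\mathbb{F}_{2^m}$ and write $\mathrm{Tr}\colon\mathbb{F}_{2^m}\to\mathbb{F}_2$ for the absolute trace, so that the inner product on $\mathbb{F}_2^{n}$ becomes $\langle(x,y),(a,b)\rangle=\mathrm{Tr}(ax)\oplus\mathrm{Tr}(by)$. Writing $g(t)=\sum_{j=0}^{k-1}2^jG_j(t)\in\mathcal{GF}_m^{2^k}$, one has $f(x,y)=g(x/y)$, and the hypotheses become: $g(0)=0$; $\sum_{t\in\mathbb{F}_{2^m}}\omega^{g(t)}=0$; and $g(1/u)=g(u)$ for every $u\in\mathbb{F}_{2^m}$ (the last because the base-$2$ digits of an element of $\mathbb{Z}_{2^k}$ are uniquely determined, so $G_j(u)=G_j(1/u)$ for all $j$ is equivalent to $g(u)=g(1/u)$).

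Next I would evaluate
\[
H_f(a,b)=\sum_{x,y\in\mathbb{F}_{2^m}}\omega^{g(x/y)}(-1)^{\mathrm{Tr}(ax)+\mathrm{Tr}(by)}
\]
by splitting the outer sum according to whether $y=0$. Since $x/0=0$ and $g(0)=0$, the term $y=0$ contributes $\sum_{x}(-1)^{\mathrm{Tr}(ax)}$, namely $2^m$ when $a=0$ and $0$ otherwise. For each fixed $y\ne0$ the substitution $x=yz$ is a bijection of $\mathbb{F}_{2^m}$, and $\mathrm{Tr}(ax)+\mathrm{Tr}(by)=\mathrm{Tr}\big(y(az+b)\big)$, so the remaining part equals $\sum_{z}\omega^{g(z)}\sum_{y\ne0}(-1)^{\mathrm{Tr}(y(az+b))}$. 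The inner sum equals $2^m-1$ if $az=b$ and $-1$ otherwise, so after simplification the $y\ne0$ part is $2^m\sum_{z\colon az=b}\omega^{g(z)}-\sum_{z\in\mathbb{F}_{2^m}}\omega^{g(z)}=2^m\sum_{z\colon az=b}\omega^{g(z)}$, the second term vanishing by hypothesis.

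Then a one-line case split in $a$ finishes the transform. If $a\ne0$, the equation $az=b$ has the single solution $z=b/a$ and the $y=0$ term is $0$, so $H_f(a,b)=2^m\omega^{g(b/a)}$. If $a=0$, the $y=0$ term is $2^m$ while $\{z\colon 0=b\}$ is either all of $\mathbb{F}_{2^m}$ (when $b=0$) or empty (when $b\ne0$), so $\sum_{z\colon 0=b}\omega^{g(z)}=0$ and $H_f(0,b)=2^m$. With the conventions $b/0=0$ and $g(0)=0$ both cases collapse to the single formula $H_f(a,b)=2^m\omega^{g(b/a)}$; in particular $|H_f(a,b)|=2^m=2^{n/2}$, so $f$ is gbent and regular, with dual $\widetilde f(x,y)=g(y/x)$. (For $q=2^k$ this recovers the gbentness of the Dillon-type functions of~\cite{Martinsen}.)

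Finally I would compare $f(x,y)=g(x/y)$ with $\widetilde f(x,y)=g(y/x)$. For $x,y\ne0$, setting $u=x/y$ gives $y/x=1/u$, hence $\widetilde f(x,y)=g(1/u)=g(u)=f(x,y)$ by the symmetry hypothesis; for $x=0$ or $y=0$ both sides equal $g(0)=0$. Thus $f=\widetilde f$, i.e.\ $f\in\mathrm{SB}^{+}_{2^k}(2m)$. The computation is routine; the only step needing genuine care is keeping the boundary data ($y=0$, $a=0$, and the convention $1/0=0$) mutually consistent, so that the clean identity $H_f(a,b)=2^m\omega^{g(b/a)}$ — and correspondingly the self-duality check $g(x/y)=g(y/x)$ — really does hold on the nose, including at the points where some argument vanishes.
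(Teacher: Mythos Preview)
Your argument is correct. The Walsh--Hadamard computation is clean, the boundary cases $y=0$, $a=0$, $b=0$ are all handled consistently with the convention $1/0=0$, and the final comparison $g(x/y)=g(y/x)$ is exactly the content of the symmetry hypothesis $G_j(u)=G_j(1/u)$.

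The route, however, differs from the paper's. The paper does not redo the transform at all: it simply invokes~\cite{Martinsen}, where it is already shown that under the stated hypotheses $f$ is gbent with dual $\widetilde f(x,y)=\sum_{j=0}^{k-1}2^jG_j(y/x)$, and then observes in one line that the condition $G_j(u)=G_j(1/u)$ forces $\widetilde f=f$. Your approach recomputes this dual from first principles via the substitution $x=yz$ and the character-sum identity for $\sum_{y\ne0}(-1)^{\mathrm{Tr}(yc)}$. What you gain is a fully self-contained argument; it also makes visible that, once $g(0)=0$ and $\sum_t\omega^{g(t)}=0$ are assumed, the individual balancedness of the $G_j$ is not used in the self-duality verification. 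What the paper's approach gains is brevity: the heavy lifting has already been done in~\cite{Martinsen}, so the proposition reduces to a one-sentence observation.
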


\begin{proof}
	It is enough to mention that, as was shown in~\cite{Martinsen}, the dual gbent function of~(\ref{equation:Dillon function}) has the form
	\begin{equation*}
		\widetilde{f}(x,y)=\sum\limits_{j=0}^{k-1}2^jG_j(y/x),\ \ x,y\in\mathbb{F}_2^{n/2}.
	\end{equation*}
\end{proof}

\subsection{Iterative construction}

Let $f_0,f_1,f_2,f_3$ be Boolean functions in $n$ variables. Consider a Boolean function $g$ in $n+2$ variables which is defined as
\begin{equation*}
	g(00,x)=f_0(x),\ \ g(01,x)=f_1(x),
	\ \ g(10,x)=f_2(x),\ \ g(11,x)=f_3(x),\ \ 
	x\in\mathbb{F}_2^n.
\end{equation*}
It is known (Preneel et al.~\cite{Preneel}; see also~\cite{Tokareva_iterative}) that under condition that all functions~$f_0,f_1,f_2,f_3$ are Boolean bent functions in $n$ variables, the mentioned function $g$ is a bent function in $n+2$ variables if and only if 
\begin{equation*}
	\widetilde{f_0}\oplus\widetilde{f_1}\oplus\widetilde{f_2}\oplus\widetilde{f_3}=1,
\end{equation*} 
that gives the construction of a bent function in $n+2$ variables through the concatenation of vectors of values of four bent functions in $n$ variables~\cite{Preneel}.

Following N.\;Tokareva~\cite{Tokareva_iterative}, we will refer to Boolean bent functions obtained by this construction as {\it bent iterative functions} $\left(\mathcal{BI}\right)$. A construction of generalized bent functions in~$n+2$ variables obtained by a concatenation of four generalized Boolean functions on $n$ variables was studied in~\cite{Singh}.

Bent iterative constructions of self-dual Boolean bent functions in $n+2$ variables, based on concatenation of $4$ Boolean bent functions in $n$ variables, were presented in~\cite{Self-dual,Kutsenko_DCC}. In current work we give two constructions of generalized bent iterative functions that generalize the constructions for Boolean case:
\begin{proposition}\label{proposition:Iterative}
	\begin{itemize}
		\item[1)] Let $f$ be a regular gbent function in $n$ variables, then the sign function
		\begin{equation*}
			\big(F,\widetilde{F},\widetilde{F},-F\big),
		\end{equation*}
		where $F=\omega^f$ and $\widetilde{F}=\omega^{\widetilde{f}}$, is the sign function of a self-dual gbent function in $n+2$ variables;
		\item[2)] Let $f$ be a self-dual gbent function in $n$ variables with the sign function $F$, and $g$ be an anti-self-dual gbent function in $n$ variables with the sign function $G$, then the sign function
		\begin{equation*}
			\big(F,G,-G,F\big),
		\end{equation*}
		where $F=\omega^f$ and $G=\omega^g$, is the sign function of a gbent function in $n+2$ variables.
	\end{itemize}
\end{proposition}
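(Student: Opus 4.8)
The plan is to verify both assertions by a direct computation with the generalized Walsh--Hadamard transform, exploiting the block structure of a concatenation. Write the $n+2$ input variables as $(z,x)$ with $z=(z_1,z_2)\in\mathbb{F}_2^2$, $x\in\mathbb{F}_2^n$, and for each $z\in\mathbb{F}_2^2$ let $h_z\in\mathcal{GF}_n^q$ be the restriction of the generalized Boolean function $h$ under consideration to the coset $\{z\}\times\mathbb{F}_2^n$. Summing over the two blocks of variables separately gives, for every $(w,y)\in\mathbb{F}_2^2\times\mathbb{F}_2^n$,
\[
H_h(w,y)=\sum_{z\in\mathbb{F}_2^2}(-1)^{\langle z,w\rangle}H_{h_z}(y),
\]
so once the four transforms $H_{h_z}$ are known, the values $H_h(\cdot,y)$ are recovered by a $2$-variable Walsh transform over $\mathbb{F}_2^2$. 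In each part the plan is to substitute the explicit $H_{h_z}$, split into the cases $w_1=w_2$ and $w_1\ne w_2$, and read off the exponent of $\omega$.

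For part~$1$ I would first record the standard fact that the dual of a regular gbent function $f$ is itself regular with $\widetilde{\widetilde f}=f$; this follows by applying the inverse Walsh transform to the identity $H_f(y)=2^{n/2}\omega^{\widetilde f(y)}$, which yields $H_{\widetilde f}(y)=2^{n/2}\omega^{f(y)}$. For the function $h$ with sign vector $\big(F,\widetilde F,\widetilde F,-F\big)$ the restrictions are $h_{00}=f$, $h_{01}=h_{10}=\widetilde f$ and $h_{11}=f+q/2$, so their transforms are $H_{h_{00}}=2^{n/2}\omega^{\widetilde f}$, $H_{h_{01}}=H_{h_{10}}=2^{n/2}\omega^{f}$ and $H_{h_{11}}=-2^{n/2}\omega^{\widetilde f}$. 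Substituting into the identity above, the contributions of $h_{00}$ and $h_{11}$ cancel when $w_1=w_2$ and those of $h_{01},h_{10}$ cancel when $w_1\ne w_2$; a short case check over the four values of $w$ then gives $H_h(w,y)=2^{(n+2)/2}\omega^{h(w,y)}$ for all $(w,y)$, which is precisely regularity of $h$ together with $\widetilde h=h$.

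Part~$2$ is entirely analogous. For the function $h$ with sign vector $\big(F,G,-G,F\big)$ the restrictions are $h_{00}=h_{11}=f$, $h_{01}=g$ and $h_{10}=g+q/2$; self-duality of $f$ gives $H_f=2^{n/2}\omega^{f}$, and anti-self-duality of $g$ gives $H_g=2^{n/2}\omega^{\widetilde g}=2^{n/2}\omega^{g+q/2}=-2^{n/2}\omega^{g}$. Running the same case split (now according to the parity of $w_1+w_2$) shows $|H_h(w,y)|=2^{(n+2)/2}$ in each of the four cases, which is exactly gbentness; tracking the exponents in fact shows $h$ is regular with dual again represented by $\big(F,G,-G,F\big)$, so $h$ is even self-dual, though only gbentness is claimed.

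I do not expect a genuine obstacle: the heart of the argument is the routine $2\times 2$ Walsh transform over $\mathbb{F}_2^2$, and the only points needing care are the sign bookkeeping across the four cosets, the preliminary identity $\widetilde{\widetilde f}=f$ for regular gbent $f$ used in part~$1$, and the observation $H_g=-2^{n/2}\omega^{g}$ for an anti-self-dual $g$ used in part~$2$.
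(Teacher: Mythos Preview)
Your proposal is correct and is essentially the same argument as the paper's: writing $H_h(w,y)=\sum_{z\in\mathbb{F}_2^2}(-1)^{\langle z,w\rangle}H_{h_z}(y)$ is exactly the block decomposition $\mathcal{H}_{n+2}=\tfrac12\big(\pm\mathcal{H}_n\big)_{4\times4}$ that the paper multiplies out, so the two computations differ only in notation. Your side remark that the function in part~2 is in fact self-dual is also correct; the paper's displayed final column $(-F,-G,G,-F)$ is a sign slip (it tacitly uses $\mathcal{H}_nG=G$ rather than $\mathcal{H}_nG=-G$), and the correct output is $(F,G,-G,F)$, which of course still yields the claimed gbentness.
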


	\begin{proof}
		Let $F=\omega^f$ be a sign function of regular gbent function $f$ in~$n$ variables. It is clear that the function $h$ is self-dual gbent if and only if
		\begin{align*}
			\mathcal{H}_{n+2}\begin{pmatrix}
				F\\
				\widetilde{F}\\
				\widetilde{F}\\
				-F
			\end{pmatrix}&=
			\frac{1}{2}\begin{pmatrix}
				\mathcal{H}_n& \mathcal{H}_n& \mathcal{H}_n& \mathcal{H}_n\\
				\mathcal{H}_n& -\mathcal{H}_n& \mathcal{H}_n& -\mathcal{H}_n\\
				\mathcal{H}_n& \mathcal{H}_n & -\mathcal{H}_n& -\mathcal{H}_n\\
				\mathcal{H}_n& -\mathcal{H}_n& -\mathcal{H}_n& \mathcal{H}_n
			\end{pmatrix}
			\begin{pmatrix}
				F\\
				\widetilde{F}\\
				\widetilde{F}\\
				-F
			\end{pmatrix}\\
		&=\frac{1}{2}
			\begin{pmatrix}
				\widetilde{F}+F+F-\widetilde{F}\\
				\widetilde{F}-F+F+\widetilde{F}\\
				\widetilde{F}+F-F+\widetilde{F}\\
				\widetilde{F}-F-F-\widetilde{F}
			\end{pmatrix}
		=\begin{pmatrix}
			F\\
			\widetilde{F}\\
			\widetilde{F}\\
			-F
		\end{pmatrix},
		\end{align*}
		Let $f$ be a self-dual gbent function in $n$ variables with the sign function $F=\omega^f$, and $g$ be an anti-self-dual gbent function in $n$ variables with the sign function $G=\omega^g$, then
		\begin{align*}
			\mathcal{H}_{n+2}\begin{pmatrix}
				F\\
				G\\
				-G\\
				F
			\end{pmatrix}
			&=
			\frac{1}{2}\begin{pmatrix}
				\mathcal{H}_n& \mathcal{H}_n& \mathcal{H}_n& \mathcal{H}_n\\
				\mathcal{H}_n& -\mathcal{H}_n& \mathcal{H}_n& -\mathcal{H}_n\\
				\mathcal{H}_n& \mathcal{H}_n & -\mathcal{H}_n& -\mathcal{H}_n\\
				\mathcal{H}_n& -\mathcal{H}_n& -\mathcal{H}_n& \mathcal{H}_n
			\end{pmatrix}
			\begin{pmatrix}
				F\\
				G\\
				-G\\
				F
			\end{pmatrix}\\
			&=\frac{1}{2}
			\begin{pmatrix}
				F+G-G+F\\
				F-G-G-F\\
				F+G+G-F\\
				F-G+G+F
			\end{pmatrix}
			=\begin{pmatrix}
				-F\\
				-G\\
				G\\
				-F
			\end{pmatrix},
		\end{align*}
	\end{proof}

\section{Hamming and Lee distance spectrums}\label{section:Hamming and Lee distance spectrums}

The spectrum of Hamming distances between self-dual Maiorana--McFraland Boolean bent functions was studied in~\cite{Kutsenko_MM}. It was proved that
\begin{equation*}
	\mathrm{Sp}_H\left(\mathrm{SB}^{+}_{\mathcal{M}}(n)\cup\mathrm{SB}^{-}_{\mathcal{M}}(n)\right)=\left\{2^{n-1}\right\}\cup\bigcup\limits_{r=0}^{n/2-1}\left\{2^{n-1}\left(1\pm\frac{1}{2^r}\right)\right\},
\end{equation*}
and, if either $f,g\in\mathrm{SB}_{\mathcal{M}}^+(n)$ or $f,g\in\mathrm{SB}_{\mathcal{M}}^-(n)$, then all distances except $2^{n-1}$ are attainable, and for any pair $f\in\mathrm{SB}_{\mathcal{M}}^+(n)$ and $g\in\mathrm{SB}_{\mathcal{M}}^-(n)$ it holds $\mathrm{dist}(f,g)=2^{n-1}$.

\subsection{Hamming distance spectrum}

For generalized case we have
\begin{proposition}
	It holds
	\begin{equation*}
		\mathrm{Sp}_H\left(\mathrm{SB}^{+}_{\mathcal{GM}^q}(n)\cup\mathrm{SB}^{-}_{\mathcal{GM}^q}(n)\right)=\mathrm{Sp}_H\left(\mathrm{SB}^{+}_{\mathcal{M}}(n)\cup\mathrm{SB}^{-}_{\mathcal{M}}(n)\right).
	\end{equation*}
	Moreover, all given distances are attainable.
\end{proposition}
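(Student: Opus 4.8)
The plan is to reduce the generalized statement to the Boolean one by exploiting the explicit parametrization of $\mathrm{SB}^{\pm}_{\mathcal{GM}^q}(n)$ given by Theorem~\ref{generalized_MM_self-duality}. By that theorem, every $f\in\mathrm{SB}^{+}_{\mathcal{GM}^q}(n)\cup\mathrm{SB}^{-}_{\mathcal{GM}^q}(n)$ has the form $f(x,y)=\frac{q}{2}\langle x,L(y\oplus b)\rangle+\frac{q}{2}\langle b,y\rangle+d$ with $L\in\mathcal{O}_{n/2}$, $b\in\mathbb{F}_2^{n/2}$, $d\in\mathbb{Z}_q$. First I would observe that the Hamming distance between two such functions $f$ and $g$ (with data $L,b,d$ and $L',b',d'$) depends only on how often the $\mathbb{Z}_q$-values differ. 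Writing $f-g = \frac{q}{2}\big(\langle x,(L\oplus L')y\rangle \oplus \langle x, Lb\oplus L'b'\rangle \oplus \langle b\oplus b', y\rangle\big) + (d-d')$, the value $f(x,y)-g(x,y)$ takes at most the two values $d-d'$ and $d-d'+q/2$ as $(x,y)$ ranges over $\mathbb{F}_2^n$. Hence $f(x,y)\ne g(x,y)$ either never happens (if $d=d'$ and the bracketed affine form vanishes identically) or happens exactly on the support of the affine Boolean function $\varphi(x,y)=\langle x,(L\oplus L')y\rangle \oplus \langle x, Lb\oplus L'b'\rangle \oplus \langle b\oplus b', y\rangle \oplus [d\ne d']$. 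This is precisely the same affine Boolean function that governs the Hamming distance in the Boolean Maiorana--McFarland self-dual case; the integer shift $d-d'$ plays no role beyond a possible constant toggle.

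The second step is to invoke the known Boolean result quoted just before the proposition: for Boolean self-dual Maiorana--McFarland bent functions the set of realized Hamming distances is $\{2^{n-1}\}\cup\bigcup_{r=0}^{n/2-1}\{2^{n-1}(1\pm 2^{-r})\}$, and this comes from counting the weight of exactly such affine forms built from $L\oplus L'$ with $L,L'\in\mathcal{O}_{n/2}$. Concretely, $\mathrm{wt}_H(\varphi)\in\{0,2^{n-1}\}$ if $L=L'$, and otherwise $\mathrm{wt}_H(\varphi)=2^{n-1}\mp 2^{n-1-r}$ where $r=\operatorname{rank}(L\oplus L')/2$ (the rank of a difference of orthogonal matrices being even is the combinatorial heart of the Boolean proof). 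Since our generalized distance computation produced the identical family of affine forms, $\mathrm{Sp}_H\big(\mathrm{SB}^{+}_{\mathcal{GM}^q}(n)\cup\mathrm{SB}^{-}_{\mathcal{GM}^q}(n)\big)\subseteq \mathrm{Sp}_H\big(\mathrm{SB}^{+}_{\mathcal{M}}(n)\cup\mathrm{SB}^{-}_{\mathcal{M}}(n)\big)$.

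For the reverse inclusion and the ``all distances attainable'' claim, I would exhibit, for each target distance, an explicit pair. Taking $d=d'=0$ and $b'=b$, the generalized function $f$ degenerates (up to the factor $q/2$) to a genuine Boolean Maiorana--McFarland self-dual bent function, so every pair realizing a given distance in the Boolean setting lifts verbatim to a pair in $\mathrm{SB}^{+}_{\mathcal{GM}^q}(n)$ realizing the same distance; the value $2^{n-1}$ is attained for instance by any pair with $d'=d+q/2$, $L'=L$, $b'=b$. Thus equality of spectra holds and all distances are attained. The only genuinely delicate point is the rank-parity fact for $L\oplus L'$ with $L,L'\in\mathcal{O}_{n/2}$ and the resulting weight formula for $\varphi$, but this is exactly the content already established in~\cite{Kutsenko_MM} for the Boolean case, so here it may simply be cited; the generalized layer adds nothing beyond bookkeeping of the $q/2$ factor and the constant $d-d'$.
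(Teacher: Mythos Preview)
Your approach matches the paper's: write each function as $\tfrac{q}{2}h+d$ with $h$ a Boolean (anti-)self-dual Maiorana--McFarland bent function and reduce to the known Boolean spectrum. However, there are two concrete slips.

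First, the claim that ``the integer shift $d-d'$ plays no role beyond a possible constant toggle'' and the formula $\varphi=\psi\oplus[d\ne d']$ are wrong when $d-d'\notin\{0,q/2\}$: in that case $f(x,y)-g(x,y)\in\{d-d',\,d-d'+q/2\}$ never vanishes in $\mathbb{Z}_q$, so $\mathrm{dist}_H(f,g)=2^n$ irrespective of the Boolean part $\psi$. This is exactly the case split the paper makes. The inclusion you want still holds because $2^n$ is already in the Boolean spectrum, but your stated justification for it is incorrect.

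Second, your explicit example for the value $2^{n-1}$ is wrong: with $L'=L$, $b'=b$, $d'=d+q/2$ the difference $f-g$ is the constant $q/2$, so the Hamming distance is $2^n$, not $2^{n-1}$. The value $2^{n-1}$ is obtained simply by lifting a Boolean pair $h_1\in\mathrm{SB}^+_{\mathcal{M}}(n)$, $h_2\in\mathrm{SB}^-_{\mathcal{M}}(n)$ with $d=d'=0$; for this you must drop the unnecessary restriction $b'=b$ in your lifting step (that restriction would force both $h_1,h_2$ to be self-dual, or both anti-self-dual, and then $2^{n-1}$ is not attained in the Boolean case).
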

\begin{proof}
	Let $f_1,f_2\in\mathrm{SB}^{+}_{\mathcal{GM}^q}(n)\cup\mathrm{SB}^{-}_{\mathcal{GM}^q}(n)$. We have
	\begin{equation*}
		f_1(x,y)=\frac{q}{2}h_1(x,y)+d_1,\ \ x,y\in\mathbb{F}_2^{n/2},
	\end{equation*}
	\begin{equation*}
		f_2(x,y)=\frac{q}{2}h_2(x,y)+d_2,\ \ x,y\in\mathbb{F}_2^{n/2},
	\end{equation*}
	for some $h_1,h_2\in\mathrm{SB}^+_{\mathcal{M}}(n)\cup\mathrm{SB}^-_{\mathcal{M}}(n)$ and $d_1,d_2\in\mathbb{Z}_q$. If $\mathrm{wt}\left(d_1-d_2\right)\notin\left\{0,q/2\right\}$, then $\mathrm{dist}_L\left(f_1,f_2\right)=2^n$. Otherwise, the distance coincides with some value from the spectrum for binary case so by taking $d_1,d_2=0$ and varying $h_1,h_2$ this spectrum can be entirely covered.
\end{proof}

\subsection{Lee distance spectrum}

For binary case the Hamming distance coincides with the Lee distance, so for this case the Lee distance spectrum follows. For $q>2$ the spectrum can be obtained by using the set of attainable Hamming distances from binary case.	
\begin{theorem}
	It holds
	\begin{multline*}
		\mathrm{Sp}_L\left(\mathrm{SB}^{+}_{\mathcal{GM}^q}(n)\cup\mathrm{SB}^{-}_{\mathcal{GM}^q}(n)\right)\\
		=\left\{q\cdot2^{n-2}\right\}\cup\bigcup\limits_{w=0}^{q/2}\bigcup\limits_{r=0}^{n/2-1}\left\{q\cdot 2^{n-2}\left(1\pm\frac{1}{2^r}\right)\mp w\cdot 2^{n-r}\right\}.
	\end{multline*}
	Moreover, all given distances are attainable.
\end{theorem}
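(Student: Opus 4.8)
The plan is to reduce the Lee distance computation to the Hamming distance data already established for the binary Maiorana--McFarland case. As in the proof of the preceding proposition, write two functions $f_1,f_2\in\mathrm{SB}^{+}_{\mathcal{GM}^q}(n)\cup\mathrm{SB}^{-}_{\mathcal{GM}^q}(n)$ in the form $f_i(x,y)=\frac{q}{2}h_i(x,y)+d_i$ with $h_i\in\mathrm{SB}^{+}_{\mathcal{M}}(n)\cup\mathrm{SB}^{-}_{\mathcal{M}}(n)$ and $d_i\in\mathbb{Z}_q$. For a fixed point $z=(x,y)$ the value $\delta(z)=f_1(z)+(q-1)f_2(z)$ equals $\frac{q}{2}\bigl(h_1(z)\oplus h_2(z)\bigr)+(d_1-d_2)\pmod q$ up to the usual care that $\frac q2 a+\frac q2 b\equiv\frac q2(a\oplus b)\pmod q$. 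So $\delta(z)$ takes only one of the two values $d_1-d_2$ (on the $2^n-\mathrm{dist}_H(h_1,h_2)$ points where $h_1=h_2$) or $\frac q2+(d_1-d_2)$ (on the remaining $\mathrm{dist}_H(h_1,h_2)$ points). Writing $D=\mathrm{dist}_H(h_1,h_2)$ and $w=\mathrm{wt}_L(d_1-d_2)$, one gets
\[
  \mathrm{dist}_L(f_1,f_2)=(2^n-D)\,w+D\,\mathrm{wt}_L\!\left(\tfrac q2+(d_1-d_2)\right).
\]
The key arithmetic fact to record is $\mathrm{wt}_L(\frac q2+t)=\frac q2-\mathrm{wt}_L(t)$ for every $t\in\mathbb{Z}_q$, which turns the second term into $D(\frac q2-w)$, so $\mathrm{dist}_L(f_1,f_2)=2^n w+D(\frac q2-2w)=\frac q2 D+2w(2^n-D)$.

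Next I substitute the known Hamming spectrum. By the result quoted from~\cite{Kutsenko_MM} at the start of this section, $D$ ranges exactly over $\{2^{n-1}\}\cup\bigcup_{r=0}^{n/2-1}\{2^{n-1}(1\pm 2^{-r})\}$, and, crucially, every such value is realized by a pair $h_1,h_2$ both taken from $\mathrm{SB}^{+}_{\mathcal{M}}(n)$ (so there is no obstruction to choosing $h_1,h_2$ freely while also choosing $d_1,d_2$ freely). Plugging $D=2^{n-1}$ gives $\mathrm{dist}_L=\frac q2\cdot 2^{n-1}=q\cdot 2^{n-2}$, independent of $w$, which accounts for the isolated term. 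Plugging $D=2^{n-1}(1\pm 2^{-r})=2^{n-1}\pm 2^{n-1-r}$ gives
\[
  \mathrm{dist}_L=\tfrac q2\bigl(2^{n-1}\pm 2^{n-1-r}\bigr)+2w\bigl(2^{n-1}\mp 2^{n-1-r}\bigr)
  = q\cdot 2^{n-2}\Bigl(1\pm\tfrac1{2^r}\Bigr)\mp w\cdot 2^{n-r},
\]
after combining the $2w\cdot 2^{n-1}$ term with $q\cdot 2^{n-2}$; this is exactly the asserted family, with $w$ running over $0,1,\dots,q/2$ since $\mathrm{wt}_L$ on $\mathbb{Z}_q$ achieves every value in $\{0,1,\dots,q/2\}$ and $d_1-d_2$ can be made an arbitrary element of $\mathbb{Z}_q$.

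To finish, attainability: for each admissible pair $(r,w)$ and each sign, pick $h_1,h_2\in\mathrm{SB}^{+}_{\mathcal{M}}(n)$ realizing the corresponding Hamming distance $D$ (possible by the cited theorem), and pick $d_1,d_2\in\mathbb{Z}_q$ with $\mathrm{wt}_L(d_1-d_2)=w$; the computation above then yields the listed value, and conversely every pair of generalized self-dual/anti-self-dual Maiorana--McFarland functions produces a value on the list. The one point needing a little care — and the only real obstacle — is the modular bookkeeping in the first paragraph: verifying that $f_1(z)+(q-1)f_2(z)$ really collapses to the two-valued form claimed, i.e. handling the identity $\frac q2(h_1\oplus h_2)\equiv \frac q2 h_1+\frac q2 h_2\pmod q$ together with $\mathrm{wt}_L(\frac q2+t)=\frac q2-\mathrm{wt}_L(t)$; once these two elementary congruences are in hand the rest is substitution. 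I would also note in passing that when $q=2$ the formula degenerates ($w\in\{0,1\}$, and the $w=1$ branch reproduces the complementary Hamming distances), recovering the binary spectrum as it must.
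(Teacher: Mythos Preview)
Your approach is essentially identical to the paper's: reduce to the binary Hamming spectrum via the decomposition $f_i=\frac{q}{2}h_i+d_i$, observe that $\delta$ is two-valued, use $\mathrm{wt}_L(\frac{q}{2}+t)=\frac{q}{2}-\mathrm{wt}_L(t)$, and substitute the known values of $D=\mathrm{dist}_H(h_1,h_2)$.

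However, there is an arithmetic slip you should fix. From $(2^n-D)w+D(\frac{q}{2}-w)=2^nw-2Dw+\frac{q}{2}D$ you rewrite this as $\frac{q}{2}D+2w(2^n-D)$, but that equals $\frac{q}{2}D+2^{n+1}w-2Dw$, which is off by $2^nw$. The correct rearrangement is $\frac{q}{2}D+w(2^n-2D)$. This error propagates: your displayed substitution produces an extra $2w\cdot 2^{n-1}=w\cdot 2^n$ term, which does \emph{not} ``combine with $q\cdot 2^{n-2}$'' as you claim. With the corrected formula, $2^n-2D=\mp 2^{n-r}$ and the result drops out immediately, exactly as in the paper. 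Also, a small inaccuracy: the value $D=2^{n-1}$ is \emph{not} realized by pairs both in $\mathrm{SB}^{+}_{\mathcal{M}}(n)$ but only by mixed pairs (one self-dual, one anti-self-dual); your handling of that case is fine, but the blanket claim that ``every such value is realized by a pair both taken from $\mathrm{SB}^{+}_{\mathcal{M}}(n)$'' should be adjusted.
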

\begin{proof}
	Let $f_1,f_2\in\mathrm{SB}^{+}_{\mathcal{GM}^q}(n)\cup\mathrm{SB}^{-}_{\mathcal{GM}^q}(n)$. Again, we have
	\begin{equation*}
		f_1(x,y)=\frac{q}{2}h_1(x,y)+d_1,\ \ x,y\in\mathbb{F}_2^{n/2},
	\end{equation*}
	\begin{equation*}
		f_2(x,y)=\frac{q}{2}h_2(x,y)+d_2,\ \ x,y\in\mathbb{F}_2^{n/2},
	\end{equation*}
	for some $h_1,h_2\in\mathrm{SB}^+_{\mathcal{M}}(n)\cup\mathrm{SB}^-_{\mathcal{M}}(n)$ and $d_1,d_2\in\mathbb{Z}_q$. Denote $\mathrm{wt}_L\left(d_1-d_2\right)$ by $w$ and the Hamming distance $\mathrm{dist}_H\left(h_1,h_2\right)$ between Boolean functions $h_1,h_2$ by $d$. Under this notation the Lee distance between $f_1$ and $f_2$ is the function of $w$, $d$ and number of variables $n$. Indeed,
	\begin{align*}
		\mathrm{dist}_L\left(f_1,f_2\right)&=\sum\limits_{x,y\in\mathbb{F}_2^{n/2}}\mathrm{wt}_L\left(f_1(x,y)-f_2(x,y)\right)\\
		&=\sum\limits_{\substack{x,y\in\mathbb{F}_2^{n/2}\\h_1(x,y)=h_2(x,y)}}\mathrm{wt}_L\left(f_1(x,y)-f_2(x,y)\right)\\
		&+\sum\limits_{\substack{x,y\in\mathbb{F}_2^{n/2}\\h_1(x,y)\ne h_2(x,y)}}\mathrm{wt}_L\left(f_1(x,y)-f_2(x,y)\right)\\
		&=\sum\limits_{\substack{x,y\in\mathbb{F}_2^{n/2}\\h_1(x,y)=h_2(x,y)}}\mathrm{wt}_L\left(d_1-d_2\right)\\
		&+\sum\limits_{\substack{x,y\in\mathbb{F}_2^{n/2}\\h_1(x,y)\ne h_2(x,y)}}\mathrm{wt}_L\left(d_1-d_2+\frac{q}{2}\right)\\
		&=\left(2^n-d\right)w+d\left(\frac{q}{2}-w\right)=2^nw-2dw+\frac{q}{2}d.
	\end{align*}
	
	If $h_1\in\mathrm{SB}^+_{\mathcal{M}}(n)$ and $h_2\in\mathrm{SB}^-_{\mathcal{M}}(n)$, then
	\begin{equation*}
		\mathrm{dist}_L\left(f_1,f_2\right)=2^nw-2\cdot2^{n-1}w+\frac{q}{2}\cdot2^{n-1}=q\cdot 2^{n-2}.
	\end{equation*}
	
	From the aforementioned Hamming distance spectrum for binary case it follows that for any $r\in\{0,1,...,n/2-1\}$ there exists at least one pair of (anti-)self-dual Boolean Maiorana--McFarland bent functions in $n$ variables at the Hamming distance $d=2^{n-1}+2^{n-r-1}$ as well as $2^n-d=2^{n-1}-2^{n-r-1}$. Assume $r$ is fixed, put $d=2^{n-1}\left(1\pm 2^{-r}\right)$ in the expression for $\mathrm{dist}_L\left(f_1,f_2\right)$:
	\begin{align*}
		\mathrm{dist}_L\left(f_1,f_2\right)&=2^nw-2w\cdot 2^{n-1}\left(1\pm\frac{1}{2^r}\right)+\frac{q}{2}\cdot 2^{n-1}\left(1\pm\frac{1}{2^r}\right)\\
		&=2^nw-2^nw\mp w\cdot 2^{n-r}+q\cdot 2^{n-2}\left(1\pm\frac{1}{2^r}\right)\\
		&=q\cdot2^{n-2}\left(1\pm\frac{1}{2^r}\right)\mp w\cdot 2^{n-r}.
	\end{align*}
	
	Observation that $r$ runs $\{0,1,...,n/2-1\}$ and $w$ varies within the set $\{0,1,...,q/2\}$ yields the result.		
\end{proof}

\begin{proposition}
	The minimal Lee distance between generalized (anti-)self-dual Maiorana--McFarland bent functions in $n$ variables is equal to $q\cdot 2^{n-3}$.
\end{proposition}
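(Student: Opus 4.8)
The plan is to read the minimum directly off the Lee distance spectrum established in the previous theorem. Since $\mathrm{dist}_L(f_1,f_2)=0$ exactly when $f_1=f_2$, the minimal Lee distance between two \emph{distinct} generalized (anti-)self-dual Maiorana--McFarland bent functions is the least strictly positive element of
\begin{equation*}
	\left\{q\cdot2^{n-2}\right\}\cup\bigcup_{w=0}^{q/2}\bigcup_{r=0}^{n/2-1}\left\{q\cdot2^{n-2}\left(1\pm 2^{-r}\right)\mp w\cdot2^{n-r}\right\},
\end{equation*}
so the whole proof is just a minimization over $r\in\{0,1,\dots,n/2-1\}$, $w\in\{0,1,\dots,q/2\}$ and the two sign choices.

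First I would produce a pair realizing $q\cdot2^{n-3}$: taking $w=0$ and $r=1$ in the ``$-$'' branch gives $q\cdot2^{n-2}\bigl(1-2^{-1}\bigr)=q\cdot2^{n-3}$, and this is attained concretely since, by the Hamming distance spectrum quoted at the start of this section, there exist $h_1,h_2\in\mathrm{SB}^{+}_{\mathcal{M}}(n)$ with $\mathrm{dist}_H(h_1,h_2)=2^{n-2}$; then $f_i=\tfrac{q}{2}h_i\in\mathrm{SB}^{+}_{\mathcal{GM}^q}(n)$ and $\mathrm{dist}_L(f_1,f_2)=\tfrac{q}{2}\cdot2^{n-2}=q\cdot2^{n-3}$. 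For the matching lower bound I would use the closed form $\mathrm{dist}_L(f_1,f_2)=2w\bigl(2^{n-1}-d\bigr)+\tfrac{q}{2}d$ from the proof of the spectrum theorem, where $d=\mathrm{dist}_H(h_1,h_2)$, and split on the value of $d$: for $d=2^{n-1}$ the distance equals $q\cdot2^{n-2}$; for $d=2^{n-1}\bigl(1-2^{-r}\bigr)$ with $r\geq1$ it equals $w\cdot2^{n-r}+q\cdot2^{n-2}\bigl(1-2^{-r}\bigr)\geq q\cdot2^{n-3}$; and for $d=2^{n-1}\bigl(1+2^{-r}\bigr)$ with $r\geq1$ one checks, using $w\leq q/2$, that $q\cdot2^{n-2}\bigl(1+2^{-r}\bigr)-w\cdot2^{n-r}\geq q\cdot2^{n-2}\bigl(1-2^{-r}\bigr)\geq q\cdot2^{n-3}$.

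The case needing genuine care is $r=0$, i.e.\ $d\in\{0,2^{n}\}$: these are the pairs whose Boolean parts agree (or are complementary) and which therefore differ only by a constant of $\mathbb{Z}_q$. Such pairs do belong to $\mathrm{SB}^{\pm}_{\mathcal{GM}^q}(n)$ --- adding a constant $c$ to a regular gbent function adds $c$ to its dual --- and for them the Lee distance is a nonzero multiple of $2^{n}$, hence at least $2^{n}$. Thus the minimum over the whole spectrum is $\min\{q\cdot2^{n-3},\,2^{n}\}$, which is the asserted $q\cdot2^{n-3}$ whenever $q\cdot2^{n-3}\leq 2^{n}$; checking this comparison is the one inequality not to overlook, after which collecting the cases finishes the proof.
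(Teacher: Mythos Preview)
Your approach mirrors the paper's exactly: both minimize over the Lee--distance spectrum, treat the $r=0$ branch separately (showing those distances are at least $2^n$ for distinct functions), and then bound the $r\geqslant1$ branch from below by $q\cdot2^{n-3}$, exhibiting that value at $r=1$ with $w=0$ (the paper also notes $r=1$, $w=q/2$). Up to that point there is no substantive difference.

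Where you diverge is in the last sentence, and there you have actually uncovered a genuine gap that the paper glosses over. You correctly arrive at $\min\{q\cdot2^{n-3},\,2^{n}\}$ and then say that ``checking this comparison is the one inequality not to overlook''; but that comparison $q\cdot2^{n-3}\leqslant 2^{n}$ is equivalent to $q\leqslant 8$, which is \emph{not} a standing hypothesis of the paper (only ``$q$ even'' is assumed). For any even $q\geqslant10$ take $h\in\mathrm{SB}^{+}_{\mathcal{M}}(n)$ and set $f_1=\tfrac{q}{2}h$, $f_2=\tfrac{q}{2}h+1$; both lie in $\mathrm{SB}^{+}_{\mathcal{GM}^q}(n)$ and $\mathrm{dist}_L(f_1,f_2)=2^{n}<q\cdot2^{n-3}$. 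So the proposition as stated fails in that range, and your own analysis already proves it. The paper's proof has precisely the same defect: it dismisses $r=0$ with the remark that there $D\geqslant 2^{n}$, but never returns to compare $2^{n}$ with the value $q\cdot2^{n-3}$ obtained for $r\geqslant1$. Your argument is sound up to the final step; the problem is with the statement itself, and you should flag that rather than treat the missing inequality as routine.
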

\begin{proof}
	Estimate the minimal value of the term
	\begin{equation*}
		D=q\cdot2^{n-2}\left(1\pm\frac{1}{2^r}\right)\mp w\cdot 2^{n-r},
	\end{equation*}
	with $r\in\{1,2,...,n/2-1\}$ and $w\in\{0,1,...,q/2\}$. Here we exclude the case $r=0$ since then the Lee distance is equal to either $D=w\cdot 2^n\geqslant 2^n$ or $D=2^{n-1}\left(q-2w\right)\geqslant 2^n$, provided that $f_1,f_2$ are distinct. Indeed, $r=0$ implies $d\in\left\{0,2^n\right\}$, and the first aforementioned expression for $D$ corresponds to $h_1=h_2$, while the second one to $h_1\oplus h_2=1$.
	
	Consider two cases depending on sequence of the signs. 
	
	\underline{Case 1:} 
	\begin{equation*}
		D=q\cdot2^{n-2}\left(1+\frac{1}{2^r}\right)-w\cdot 2^{n-r}=q\cdot 2^{n-2}+2^{n-r}\left(\frac{q}{4}-w\right).
	\end{equation*}
	
	Since $w\in\{0,1,...,q/2\}$ it follows that
	\begin{equation*}
		-\frac{q}{4}\leqslant\frac{q}{4}-w\leqslant\frac{q}{4},
	\end{equation*}
	hence
	\begin{equation*}
		-\frac{q}{4}\cdot 2^{n-r}\leqslant 2^{n-r}\left(\frac{q}{4}-w\right)\leqslant\frac{q}{4}\cdot 2^{n-r}.
	\end{equation*}
	
	Then
	\begin{equation*}
		D\geqslant q\cdot2^{n-2}-\frac{q}{4}\cdot 2^{n-r}=q\cdot 2^{n-2}\left(1-\frac{1}{2^r}\right)\geqslant q\cdot 2^{n-3},
	\end{equation*}
	and $D_{\min}=q\cdot 2^{n-3}$, that is attainable for $r=1$ and $w=q/2$.
	
	\underline{Case 2:} 
	\begin{equation*}
		D=q\cdot2^{n-2}\left(1-\frac{1}{2^r}\right)+w\cdot 2^{n-r}=q\cdot 2^{n-2}+2^{n-r}\left(w-\frac{q}{4}\right).
	\end{equation*}
	
	From $w\in\{0,1,...,q/2\}$ it follows that
	\begin{equation*}
		-\frac{q}{4}\leqslant w-\frac{q}{4}\leqslant\frac{q}{4},
	\end{equation*}
	hence
	\begin{equation*}
		-\frac{q}{4}\cdot 2^{n-r}\leqslant 2^{n-r}\left(w-\frac{q}{4}\right)\leqslant\frac{q}{4}\cdot 2^{n-r}.
	\end{equation*}
	
	Then
	\begin{equation*}
		D\geqslant q\cdot2^{n-2}-\frac{q}{4}\cdot 2^{n-r}=q\cdot 2^{n-2}\left(1-\frac{1}{2^r}\right)\geqslant q\cdot 2^{n-3},
	\end{equation*}
	and again $D_{\min}=q\cdot 2^{n-3}$, that is attainable for $r=1$ and $w=0$.
	
	Thus the minimal Lee distance is equal to $q\cdot 2^{n-3}$.
\end{proof}

In~\cite{Paterson} it was shown that both minimal Hamming and Lee distances of generalized Reed--Muller codes $\mathrm{RM}_q(r,n)$ are equal to~$2^{n-r}$ for any positive integer~$q$. Therefore, it immediately follows that
\begin{corollary}
	The minimal Hamming distance $2^{n-2}$ between quadratic (generalized) bent functions is attainable on the sets of self-dual and anti-self-dual  Maiorana--McFarland bent functions from $\mathcal{GM}^q_n$ only for $q=2$.
\end{corollary}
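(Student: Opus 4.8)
The plan is to reduce the corollary to a single numerical comparison. By \cite{Paterson} the number $2^{n-2}$ is the minimum distance of $\mathrm{RM}_q(2,n)$, in both the Hamming and the Lee metric; and the Proposition just above the corollary already identifies the exact smallest distance between two distinct generalized (anti-)self-dual Maiorana--McFarland bent functions. So, after checking that the functions in question are codewords of $\mathrm{RM}_q(2,n)$, it remains only to see when these two quantities agree.

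First I would note that, by Theorem~\ref{generalized_MM_self-duality}, every $f\in\mathrm{SB}^{+}_{\mathcal{GM}^q}(n)\cup\mathrm{SB}^{-}_{\mathcal{GM}^q}(n)$ has the form $\tfrac{q}{2}\langle x,L(y\oplus b)\rangle+\tfrac{q}{2}\langle b,y\rangle+d$ with $L\in\mathcal{O}_{n/2}$, $b\in\mathbb{F}_2^{n/2}$, $d\in\mathbb{Z}_q$, so it has algebraic degree at most $2$ (its quadratic part being $\tfrac{q}{2}\langle x,Ly\rangle$) and thus lies in $\mathrm{RM}_q(2,n)$. Hence, for distinct $f_1,f_2$ in these sets, $f_1-f_2$ is a nonzero codeword of $\mathrm{RM}_q(2,n)$, and \cite{Paterson} yields $\mathrm{dist}_H(f_1,f_2)\geqslant 2^{n-2}$ and $\mathrm{dist}_L(f_1,f_2)\geqslant 2^{n-2}$, with $2^{n-2}$ the exact minimum over the whole code in either metric.

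Next I would apply the preceding Proposition: the minimal Lee distance between two distinct such functions equals exactly $q\cdot 2^{n-3}$ (established, and meaningful, for $n\geqslant 4$). The equality $q\cdot 2^{n-3}=2^{n-2}$ holds if and only if $q=2$. Therefore, for even $q>2$ no pair of generalized (anti-)self-dual Maiorana--McFarland bent functions attains the code minimum $2^{n-2}$, while for $q=2$ --- where the Hamming and Lee metrics coincide and $q\cdot 2^{n-3}=2^{n-2}$ --- it is attained; this is the assertion. (In the Hamming metric taken on its own, the value $2^{n-2}=2^{n-1}(1-2^{-1})$ is in fact attained for every even $q$, e.g.\ by $f_i=\tfrac{q}{2}h_i$ for appropriate (anti-)self-dual Boolean Maiorana--McFarland bent functions $h_1,h_2$, via the binary spectrum recalled at the start of Section~\ref{section:Hamming and Lee distance spectrums}; so the content of the corollary is carried by the Lee-distance comparison, where agreement with the code minimum forces $q=2$.)

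I expect no real obstacle once the preceding Proposition is in hand: the corollary is then immediate. The two points that need a little care are (i) justifying that the functions are quadratic, so that the lower bound $2^{n-2}$ from \cite{Paterson} is legitimate, and (ii) keeping track of the hypothesis $n\geqslant 4$ under which the value $q\cdot 2^{n-3}$ was obtained --- for $n=2$ the distance $2^{n-2}$ is attained for no even $q$, so the statement is vacuous in that case.
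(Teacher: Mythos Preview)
Your argument is exactly the paper's own: the corollary is presented there as an immediate consequence of the preceding Proposition (minimal Lee distance $q\cdot 2^{n-3}$) combined with the Paterson result that the minimum distance of $\mathrm{RM}_q(2,n)$ equals $2^{n-2}$, and you have merely written out that step in full. Your parenthetical observation---that in the Hamming metric alone the value $2^{n-2}=2^{n-1}(1-2^{-1})$ already lies in the spectrum for every even $q$---is correct and pinpoints that the force of the corollary comes from the Lee-distance comparison, which the paper's wording leaves somewhat implicit.
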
	

\section{Sign functions of (anti-)self-dual gbent functions}\label{section:Sign functions of (anti-)self-dual gbent functions}

Let $I_n$ be the identity matrix of size $n$ and $H_n=H_1^{\otimes n}$ be the $n$-fold tensor product of the matrix $H_1$ with itself, where
\begin{equation*}
	H_1=\begin{pmatrix}
		1 & 1 \\
		1 & -1
	\end{pmatrix}.
\end{equation*}
It is known the Hadamard property of this matrix
\begin{equation*}
	H_nH_n^T=2^nI_{2^n},
\end{equation*}
where $H_n^{T}$ is transpose of $H_n$ (it holds $H_n^T=H_n$ by symmetricity of $H_n$). Denote~$\mathcal{H}_n=2^{-n/2}H_n$.

Recall an orthogonal decomposition of $\mathbb{R}^{2^n}$ in eigenspaces of $H_n$ from~\cite{Self-dual} (Lemma~5.2):
\begin{equation*}
	\mathbb{R}^{2^n}=\mathrm{Ker}\left(H_n+2^{n/2}I_{2^n}\right)\oplus\mathrm{Ker}\left(H_n-2^{n/2}I_{2^n}\right),
\end{equation*}
where the symbol $\oplus$ denotes a direct sum of subspaces. Consider the same decomposition
\begin{equation*}
	\mathbb{C}^{2^n}=\mathrm{Ker}\left(H_n+2^{n/2}I_{2^n}\right)\oplus\mathrm{Ker}\left(H_n-2^{n/2}I_{2^n}\right),
\end{equation*}
for a complex space $\mathbb{C}^{2^n}$.

As for the Boolean case (see~\cite{Kutsenko_CC}), we note that sign function of any self-dual gbent function is the eigenvector of $\mathcal{H}_n$ attached to the eigenvalue~$(+1)$, that is an element from the subspace $\mathrm{Ker}\big(\mathcal{H}_n-I_{2^n}\big)=\mathrm{Ker}\big(H_n-2^{n/2}I_{2^n}\big)$. The same holds for a sign function of any anti-self-dual gbent function, which obviously is an eigenvector of $\mathcal{H}_n$ attached to the eigenvalue~$(-1)$, that is an element from the subspace $\mathrm{Ker}\big(\mathcal{H}_n+I_{2^n}\big)=\mathrm{Ker}\big(H_n+2^{n/2}I_{2^n}\big)$. 

It is known that
\begin{equation*}
	\mathrm{dim}\big(\mathrm{Ker}\left(\mathcal{H}_n+I_{2^n}\right)\big)=\mathrm{dim}\big(\mathrm{Ker}\big(\mathcal{H}_n-I_{2^n}\big)\big)=2^{n-1}, 
\end{equation*}
where $\mathrm{dim}(V)$ is the dimension of the subspace $V\subseteq\mathbb{R}^{2^n}$. Moreover, since $\mathcal{H}_n$ is symmetric (Hermitian), the subspaces $\mathrm{Ker}\big(\mathcal{H}_n+I_{2^n}\big)$ and $\mathrm{Ker}\big(\mathcal{H}_n-I_{2^n}\big)$ are mutually orthogonal.

In~\cite{Kutsenko_DCC} it was proved that provided $n\geqslant4$, the linear span of sign functions of self-dual as well as anti-self-dual Boolean bent functions Boolean bent functions in $n$ variables has dimension $2^{n-1}$. The same result can be also given for gbent functions:
\begin{theorem}\label{basis_generalized}
	Let $n\geqslant4$, then the linear span of sign functions of (anti-)self-dual gbent functions in $n$ variables has dimension $2^{n-1}$.
\end{theorem}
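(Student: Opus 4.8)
The plan is to squeeze the dimension between the same bound from above and below, namely $2^{n-1}$. For the upper bound, recall from the discussion preceding the theorem that the sign function of every self-dual gbent function lies in $\mathrm{Ker}\big(\mathcal{H}_n-I_{2^n}\big)$ and the sign function of every anti-self-dual gbent function lies in $\mathrm{Ker}\big(\mathcal{H}_n+I_{2^n}\big)$; since each of these eigenspaces has dimension $2^{n-1}$, the linear span of the sign functions of self-dual gbent functions (respectively, of anti-self-dual gbent functions) in $n$ variables has dimension at most $2^{n-1}$.

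For the lower bound I would embed the Boolean (anti-)self-dual bent functions into the generalized setting via $h\mapsto\frac{q}{2}h$. Given a Boolean function $h$ in $n$ variables, the sign function of $\frac{q}{2}h\in\mathcal{GF}_n^q$ is $\omega^{\frac{q}{2}h}=e^{\pi i h}=(-1)^h$, so it coincides with the sign vector of $h$; consequently $H_{\frac{q}{2}h}(y)=\sum_{x\in\mathbb{F}_2^n}(-1)^{h(x)}(-1)^{\langle x,y\rangle}$ is the ordinary Walsh--Hadamard transform of $h$, hence $\frac{q}{2}h$ is gbent precisely when $h$ is bent, and in that case $H_{\frac{q}{2}h}(y)=2^{n/2}(-1)^{\widetilde{h}(y)}=2^{n/2}\omega^{\frac{q}{2}\widetilde{h}(y)}$, i.e.\ the dual of $\frac{q}{2}h$ is $\frac{q}{2}\widetilde{h}$. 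Using the identity $\frac{q}{2}(h\oplus1)=\frac{q}{2}h+\frac{q}{2}\pmod q$, it follows that $\frac{q}{2}h$ is self-dual gbent whenever $h$ is self-dual Boolean bent, and anti-self-dual gbent whenever $h$ is anti-self-dual Boolean bent, and in both cases the sign vector is preserved.

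Therefore the set of sign functions of self-dual gbent functions in $n$ variables contains all sign functions of self-dual Boolean bent functions in $n$ variables, and likewise in the anti-self-dual case. By the result of~\cite{Kutsenko_DCC} quoted above, for $n\geqslant4$ the linear span of the sign functions of self-dual (respectively, anti-self-dual) Boolean bent functions in $n$ variables already has dimension $2^{n-1}$. Combining this with the upper bound from the first paragraph gives that the linear span of sign functions of (anti-)self-dual gbent functions in $n$ variables has dimension exactly $2^{n-1}$.

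There is essentially no real obstacle beyond the bookkeeping above: the only steps needing justification are that $h\mapsto\frac{q}{2}h$ carries (anti-)self-dual Boolean bent functions to (anti-)self-dual gbent functions while preserving sign vectors — the short Walsh--Hadamard computation just given — and the invocation of the Boolean dimension count of~\cite{Kutsenko_DCC}. If a self-contained argument were desired, one could instead produce $2^{n-1}$ explicit linearly independent self-dual gbent sign vectors directly, for instance from the Maiorana--McFarland family characterised in Theorem~\ref{generalized_MM_self-duality} combined with the iterative construction of Proposition~\ref{proposition:Iterative}; but reusing the Boolean result is considerably shorter and is the route I would take.
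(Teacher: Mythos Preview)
Your proof is correct and follows essentially the same approach as the paper: the paper observes that since $q$ is even, $-1=\omega^{q/2}$, so the sign functions of (anti-)self-dual Boolean bent functions form a subset of those of (anti-)self-dual gbent functions, and then invokes the same Boolean dimension result from~\cite{Kutsenko_DCC}. Your version is simply more explicit about the embedding $h\mapsto\frac{q}{2}h$ and about the upper bound coming from the eigenspace dimension, both of which the paper leaves to the surrounding discussion.
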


\begin{proof}
	It is enough to mention that since $q$ is even it holds $(-1)=\omega^{q/2}\in\big\{\omega,\omega^2,...,\omega^{q-1}\big\}$, therefore the set of sign fuctions of (anti-)self-dual Boolean bent functions in $n$ variables is a subset of the set of sign functions of (anti-)self-dual gbent functions in $n$ variables. Then from~\cite{Kutsenko_DCC} (Theorem~2) the dimension follows.
\end{proof}

It is worth to note that the example of the basis of the subspace $\mathrm{Ker}\big(\mathcal{H}_n-I_{2^n}\big)$ can be constructed by using the functions obtained from the construction from Proposition~\ref{proposition:Iterative}.

When $n=2$ there are two self-dual Boolean bent functions, namely $x_1x_2$ and $x_1x_2\oplus1$, which have sign functions $(1,1,1,-1)$ and $(-1,-1,-1,1)$ respectively. These sign functions are linearly dependent vectors in $\mathbb{R}^4$. The set $\mathrm{SB}^-(2)$ consists of functions $x_1x_2\oplus x_1\oplus x_2$ and $x_1x_2\oplus x_1\oplus x_2\oplus 1$ with sign functions $(1,-1,-1,-1)$ and $(-1,1,1,1)$ respectively. These sign functions are linearly dependent vectors in $\mathbb{R}^4$ as well. Generalization comprises solution of the system
\begin{equation*}
	\frac{1}{2}\begin{pmatrix}
		1& 1& 1& 1\\
		1& -1& 1& -1\\
		1& 1 & -1& -1\\
		1& -1& -1& 1
	\end{pmatrix}
	\begin{pmatrix}
		\omega^{d_1}\\
		\omega^{d_2}\\
		\omega^{d_3}\\
		\omega^{d_4}
	\end{pmatrix}=
	\begin{pmatrix}
		\omega^{d_1}\\
		\omega^{d_2}\\
		\omega^{d_3}\\
		\omega^{d_4}
	\end{pmatrix},
\end{equation*}
where variables are numbers $d_1,d_2,d_3,d_4\in\mathbb{Z}_q$ in fact. It is clear that the only solution pattern is
\begin{equation*}
	\big(\omega^d,\omega^d,\omega^d,\omega^{d+q/2}\big)=\omega^d\cdot\left(1,1,1,-1\right)\in\mathbb{C}^4,
\end{equation*}
where $d\in\mathbb{Z}_q$. It means that any two sign functions of self-dual gbent functions from $\mathrm{SB}^+_q(2)$ are linearly dependent over $\mathbb{C}$ and $\big\lvert\mathrm{SB}^+_q(2)\big\rvert=q$.

The next result is a generalization of the similar one from~\cite{Kutsenko_DCC}.
\begin{theorem}\label{theorem:components}
	Let $n\geqslant4$ and $f\in\mathrm{SB}_q^+(n)$. For sign function $\omega^f=\big(F^{00},F^{01},F^{10},F^{11}\big)$, where $F^{00},F^{01},F^{10},F^{11}\in\big\{1,\omega,\omega^2,\ldots,\omega^{q-1}\big\}^{2^{n-2}}$, it holds
	\begin{align*}
		\big\langle F^{00},F^{01}\big\rangle+\big\langle F^{10},F^{11}\big\rangle&=0,\\
		\big\langle F^{00},F^{10}\big\rangle+\big\langle F^{01},F^{11}\big\rangle&=0.
	\end{align*}
\end{theorem}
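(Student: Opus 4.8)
The plan is to derive both identities from one lemma: \emph{if $f\in\mathrm{SB}^+_q(n)$ and $i\in\{1,\dots,n\}$, then the restrictions $f^{(0)},f^{(1)}\in\mathcal{GF}^q_{n-1}$ of $f$ obtained by fixing $x_i=0$, respectively $x_i=1$, have orthogonal sign functions: $\langle\omega^{f^{(0)}},\omega^{f^{(1)}}\rangle=0$.} Since coordinate permutations are orthogonal they preserve self-duality, so it is enough to prove the lemma when $i=1$; the two identities of the theorem then follow by applying the lemma to $i=1$ and to $i=2$, unwinding the block form $\omega^f=(F^{00},F^{01},F^{10},F^{11})$. Indeed, for $i=2$ one has $\omega^{f^{(0)}}=(F^{00},F^{10})$ and $\omega^{f^{(1)}}=(F^{01},F^{11})$, so $\langle F^{00},F^{01}\rangle+\langle F^{10},F^{11}\rangle=0$, and for $i=1$ one has $\omega^{f^{(0)}}=(F^{00},F^{01})$ and $\omega^{f^{(1)}}=(F^{10},F^{11})$, so $\langle F^{00},F^{10}\rangle+\langle F^{01},F^{11}\rangle=0$.

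To prove the lemma, write $f^{(0)}(x')=f(0,x')$ and $f^{(1)}(x')=f(1,x')$ for $x'\in\mathbb{F}_2^{n-1}$. Splitting the defining sum of $H_f$ according to the first bit gives, for every $y'\in\mathbb{F}_2^{n-1}$, the relations $H_f(0,y')=H_{f^{(0)}}(y')+H_{f^{(1)}}(y')$ and $H_f(1,y')=H_{f^{(0)}}(y')-H_{f^{(1)}}(y')$. Self-duality $H_f(y)=2^{n/2}\omega^{f(y)}$ replaces the left-hand sides by $2^{n/2}\omega^{f^{(0)}(y')}$ and $2^{n/2}\omega^{f^{(1)}(y')}$, and solving this $2\times2$ linear system yields the explicit formulas $H_{f^{(0)}}(y')=2^{n/2-1}\big(\omega^{f^{(0)}(y')}+\omega^{f^{(1)}(y')}\big)$ and $H_{f^{(1)}}(y')=2^{n/2-1}\big(\omega^{f^{(0)}(y')}-\omega^{f^{(1)}(y')}\big)$.

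The key step is then to evaluate the scalar product $\big\langle H_{f^{(0)}},\,\omega^{f^{(1)}}\big\rangle$ on $\mathbb{C}^{2^{n-1}}$ in two ways. Substituting the first formula and using $\langle\omega^{f^{(1)}},\omega^{f^{(1)}}\rangle=2^{n-1}$ gives $2^{n/2-1}\big(\langle\omega^{f^{(0)}},\omega^{f^{(1)}}\rangle+2^{n-1}\big)$. On the other hand, interchanging the two summations inside $H_{f^{(0)}}$, which is legitimate because the Walsh kernel $(-1)^{\langle x',y'\rangle}$ is real and symmetric, rewrites the product as $\big\langle\omega^{f^{(0)}},\,H_{f^{(1)}}\big\rangle$; substituting the second formula gives $2^{n/2-1}\big(2^{n-1}-\langle\omega^{f^{(0)}},\omega^{f^{(1)}}\rangle\big)$. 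Comparing the two evaluations forces $2\langle\omega^{f^{(0)}},\omega^{f^{(1)}}\rangle=0$, which is the lemma.

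All the computations are routine; the only delicate point is to be sure that self-duality, and not merely gbentness, is being used in the double evaluation. It enters exactly through the two explicit formulas: gbentness of $f$ alone would only give $|H_{f^{(0)}}(y')|^2+|H_{f^{(1)}}(y')|^2=2^n$, which does not determine the phases, whereas self-duality pins $H_{f^{(0)}}$ and $H_{f^{(1)}}$ down as two genuinely different combinations of $\omega^{f^{(0)}}$ and $\omega^{f^{(1)}}$, so equating the two expressions for $\langle H_{f^{(0)}},\omega^{f^{(1)}}\rangle$ is a real constraint. One should also double-check the index bookkeeping relating $\omega^f=(F^{00},F^{01},F^{10},F^{11})$ to the hyperplane restrictions in the cases $i=1$ and $i=2$.
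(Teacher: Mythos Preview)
Your argument is correct and substantially cleaner than the paper's. The paper proceeds by invoking the prior basis construction for $\mathrm{Ker}(\mathcal{H}_n-I_{2^n})$ (Theorem~\ref{basis_generalized} and the explicit vectors $\mathbf{F}_i^n,\mathbf{G}_j^n,(\mathbf{FG})_k^n$ from~\cite{Kutsenko_DCC}), writes $\omega^f$ in the form $(\mathbf{F}+\mathbf{G}+\mathbf{A},\,\mathbf{F}-\mathbf{G}-\mathbf{B},\,\mathbf{F}-\mathbf{G}+\mathbf{B},\,-\mathbf{F}-\mathbf{G}+\mathbf{A})$ with $\mathbf{F},\mathbf{A}$ in the $(+1)$-eigenspace and $\mathbf{G},\mathbf{B}$ in the $(-1)$-eigenspace of $\mathcal{H}_{n-2}$, and then expands both inner products by brute force, eventually showing each equals a sum of a purely imaginary number and its conjugate. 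Your route bypasses all of this structural machinery: the single observation that self-duality forces $H_{f^{(0)}}=2^{n/2-1}(\omega^{f^{(0)}}+\omega^{f^{(1)}})$ and $H_{f^{(1)}}=2^{n/2-1}(\omega^{f^{(0)}}-\omega^{f^{(1)}})$, combined with self-adjointness of $H_{n-1}$, yields the orthogonality in two lines. Two bonuses of your approach worth recording: it gives the orthogonality $\langle\omega^{f_{x_i=0}},\omega^{f_{x_i=1}}\rangle=0$ for \emph{every} coordinate $i$, not just $i=1,2$; and it does not rely on the basis result of Theorem~\ref{basis_generalized}, hence it needs no hypothesis $n\geqslant4$ and works already for $n\geqslant2$. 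The index bookkeeping you flag is indeed routine: for $i=2$ the restrictions interleave the blocks, but both $\omega^{f_{x_2=0}}$ and $\omega^{f_{x_2=1}}$ are indexed by the same $(x_1,x_3,\dots,x_n)$, so the inner product splits as $\langle F^{00},F^{01}\rangle+\langle F^{10},F^{11}\rangle$ exactly as you state.
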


\begin{proof}
	Let~$f\in\mathrm{SB}_q^+(n)$, then by Theorem~\ref{basis_generalized} there exist vectors
	\begin{align*}
		\alpha&=\left(\alpha_1,\alpha_2,\ldots,\alpha_{2^{n-3}}\right)\in\mathbb{C}^{2^{n-3}},\\
		\beta&=\left(\beta_1,\beta_2,\ldots,\beta_{2^{n-3}}\right)\in\mathbb{C}^{2^{n-3}},\\
		\gamma&=\left(\gamma_1,\gamma_2,\ldots,\gamma_{2^{n-2}}\right)\in\mathbb{C}^{2^{n-2}},
	\end{align*}
	such that
	\begin{equation*}
		\omega^f=\sum\limits_{i=1}^{2^{n-3}}{\alpha_i{\mathbf{F}}_i^n}+\sum\limits_{j=1}^{2^{n-3}}{\beta_j{\mathbf{G}}_j^n}+\sum\limits_{k=1}^{2^{n-2}}{\gamma_k\left({\mathbf{FG}}\right)_k^n},
	\end{equation*}
	where the sets $S_{\mathbf{F}}=\left\{\mathbf{F}_i^n\right\}_{i=1}^{2^{n-3}}$, $S_{\mathbf{G}}=\left\{\mathbf{G}_j^n\right\}_{j=1}^{2^{n-3}}$ and $S_{\mathbf{FG}}=\left\{\left(\mathbf{FG}\right)_k^n\right\}_{k=1}^{2^{n-2}}$ are described in the proof of Theorem~$2$ from~\cite{Kutsenko_DCC}. Consider the sets $S_{\mathbf{F}}$, $S_{\mathbf{G}}$, $S_{\mathbf{FG}}$ and denote 
	\begin{align*}
		\mathbf{F}_i^n&=\left(F_i,F_i,F_i,-F_i\right),\\
		\mathbf{G}_j^n&=\left(G_j,-G_j,-G_j,-G_j\right),\\ \left(\mathbf{FG}\right)_k^n&=\left(A_k,-B_k,B_k,A_k\right), 
	\end{align*}
	where $F_i,A_k\in\mathrm{Ker}\big(\mathcal{H}_{n-2}-I_{2^{n-2}}\big)$, $G_j,B_k\in\mathrm{Ker}\big(\mathcal{H}_{n-2}+I_{2^{n-2}}\big)$, $i,j=1,2,\ldots,2^{n-3}$, $k=1,2,\ldots,2^{n-2}$, and define the vectors
	\begin{align*}
		\mathbf{F}&=\sum\limits_{i=1}^{2^{n-3}}\alpha_iF_i,&
		\mathbf{G}&=\sum\limits_{j=1}^{2^{n-3}}\beta_jG_j,& 
		\mathbf{A}&=\sum\limits_{k=1}^{2^{n-2}}\gamma_kA_k,&
		\mathbf{B}&=\sum\limits_{k=1}^{2^{n-2}}\gamma_kB_k.
	\end{align*}
	Under this notation the sign function $\omega^f$ has the form
	\begin{equation*}
		\omega^f=
		\begin{pmatrix}
			F^{00}\\
			F^{01}\\
			F^{10}\\
			F^{11}
		\end{pmatrix}
		=
		\begin{pmatrix}
			\mathbf{F}+\mathbf{G}+\mathbf{A}\\
			\mathbf{F}-\mathbf{G}-\mathbf{B}\\
			\mathbf{F}-\mathbf{G}+\mathbf{B}\\
			-\mathbf{F}-\mathbf{G}+\mathbf{A}
		\end{pmatrix}\in\big\{1,\omega,\omega^2,...,\omega^{q-1}\big\}^{2^n}.
	\end{equation*}
	
	For any $j=1,2,\ldots,2^{n-2}$ denote
	\begin{align*}
		&\big(\mathbf{F}+\mathbf{G}\big)_j+\mathbf{A}_j=\omega^{t_j},\\
		&\big(\mathbf{F}-\mathbf{G}\big)_j-\mathbf{B}_j=\omega^{r_j},\\
		&\big(\mathbf{F}-\mathbf{G}\big)_j+\mathbf{B}_j=\omega^{l_j},\\
		&-\big(\mathbf{F}+\mathbf{G}\big)_j+\mathbf{A}_j=\omega^{k_j},
	\end{align*}
	where $t_j,r_j,l_j,k_j\in\mathbb{Z}_q$. Then
	\begin{align*}
		\mathbf{A}_j&=\frac{1}{2}\big(\omega^{t_j}+\omega^{k_j}\big),\\
		\mathbf{B}_j&=\frac{1}{2}\big(\omega^{l_j}-\omega^{r_j}\big),\\
		\big(\mathbf{F}+\mathbf{G}\big)_j&=\frac{1}{2}\big(\omega^{t_j}-\omega^{k_j}\big),\\
		\big(\mathbf{F}-\mathbf{G}\big)_j&=\frac{1}{2}\big(\omega^{r_j}+\omega^{l_j}\big).
	\end{align*}

	Note that
	\begin{equation*}
		\scal{\mathbf{G}}{\mathbf{A}}=\scal{\mathbf{F}}{\mathbf{B}}=0.
	\end{equation*}
	By using this we obtain the expression for the first inner product
	\begin{align}\label{equation:the first inner product}
		\big\langle F^{00},F^{01}\big\rangle+\big\langle F^{10},F^{11}\big\rangle&=\scal{\mathbf{F}+\mathbf{G}+\mathbf{A}}{\mathbf{F}-\mathbf{G}-\mathbf{B}}\nonumber\\
		&+\scal{\mathbf{F}-\mathbf{G}+\mathbf{B}}{-\mathbf{F}-\mathbf{G}+\mathbf{A}}\nonumber\\
		&=\scal{\mathbf{F}}{\mathbf{F}}-\scal{\mathbf{F}}{\mathbf{G}}-\scal{\mathbf{F}}{\mathbf{B}}\nonumber\\
		&+\scal{\mathbf{G}}{\mathbf{F}}-\scal{\mathbf{G}}{\mathbf{G}}-\scal{\mathbf{G}}{\mathbf{B}}\nonumber\\
		&+\scal{\mathbf{A}}{\mathbf{F}}-\scal{\mathbf{A}}{\mathbf{G}}-\scal{\mathbf{A}}{\mathbf{B}}\nonumber\\
		&-\scal{\mathbf{F}}{\mathbf{F}}-\scal{\mathbf{F}}{\mathbf{G}}+\scal{\mathbf{F}}{\mathbf{A}}\nonumber\\
		&+\scal{\mathbf{G}}{\mathbf{F}}+\scal{\mathbf{G}}{\mathbf{G}}-\scal{\mathbf{G}}{\mathbf{A}}\nonumber\\
		&-\scal{\mathbf{B}}{\mathbf{F}}-\scal{\mathbf{B}}{\mathbf{G}}+\scal{\mathbf{B}}{\mathbf{A}}\nonumber\\
		&=\scal{\mathbf{A}}{\mathbf{F}}+\scal{\mathbf{F}}{\mathbf{A}}-\scal{\mathbf{G}}{\mathbf{B}}-\scal{\mathbf{B}}{\mathbf{G}} \nonumber\\
		&=\scal{\mathbf{A}}{\mathbf{F}+\mathbf{G}}+\overline{\scal{\mathbf{A}}{\mathbf{F}+\mathbf{G}}}
		+\scal{\mathbf{B}}{\mathbf{F}-\mathbf{G}}+\overline{\scal{\mathbf{B}}{\mathbf{F}-\mathbf{G}}}
	\end{align}
	while the second one has the form
	\begin{align}\label{equation:the second inner product}
		\big\langle F^{00},F^{10}\big\rangle+\big\langle F^{01},F^{11}\big\rangle&=\scal{\mathbf{F}+\mathbf{G}+\mathbf{A}}{\mathbf{F}-\mathbf{G}+\mathbf{B}}\nonumber\\
		&+\scal{\mathbf{F}-\mathbf{G}-\mathbf{B}}{-\mathbf{F}-\mathbf{G}+\mathbf{A}}\nonumber\\
		&=\scal{\mathbf{F}}{\mathbf{F}}-\scal{\mathbf{F}}{\mathbf{G}}+\scal{\mathbf{F}}{\mathbf{B}}\nonumber\\
		&+\scal{\mathbf{G}}{\mathbf{F}}-\scal{\mathbf{G}}{\mathbf{G}}+\scal{\mathbf{G}}{\mathbf{B}}\nonumber\\
		&+\scal{\mathbf{A}}{\mathbf{F}}-\scal{\mathbf{A}}{\mathbf{G}}+\scal{\mathbf{A}}{\mathbf{B}}\nonumber\\
		&-\scal{\mathbf{F}}{\mathbf{F}}-\scal{\mathbf{F}}{\mathbf{G}}+\scal{\mathbf{F}}{\mathbf{A}}\nonumber\\
		&+\scal{\mathbf{G}}{\mathbf{F}}+\scal{\mathbf{G}}{\mathbf{G}}-\scal{\mathbf{G}}{\mathbf{A}}\nonumber\\
		&+\scal{\mathbf{B}}{\mathbf{F}}+\scal{\mathbf{B}}{\mathbf{G}}-\scal{\mathbf{B}}{\mathbf{A}}\nonumber\\
		&=\scal{\mathbf{A}}{\mathbf{F}}+\scal{\mathbf{F}}{\mathbf{A}}+\scal{\mathbf{G}}{\mathbf{B}}+\scal{\mathbf{B}}{\mathbf{G}} \nonumber\\
		&=\scal{\mathbf{A}}{\mathbf{F}+\mathbf{G}}+\overline{\scal{\mathbf{A}}{\mathbf{F}+\mathbf{G}}}
		-\scal{\mathbf{B}}{\mathbf{F}-\mathbf{G}}-\overline{\scal{\mathbf{B}}{\mathbf{F}-\mathbf{G}}}
	\end{align}
	Consider inner in details the following inner products
	\begin{align*}
		\scal{\mathbf{A}}{\mathbf{F}+\mathbf{G}}&=\sum\limits_{j=1}^{2^n}\mathbf{A}_j\overline{\big(\mathbf{F}+\mathbf{G}\big)}_j=\frac{1}{4}\sum\limits_{j=1}^{2^n}\big(\omega^{t_j}+\omega^{k_j}\big)\big(\overline{\omega^{t_j}}-\overline{\omega^{k_j}}\big)\\
		&=\frac{1}{4}\sum\limits_{j=1}^{2^n}\big(1-1+\omega^{k_j}\overline{\omega^{t_j}}-\omega^{t_j}\overline{\omega^{k_j}}\big)=\frac{1}{2}\mathrm{Im}\left(\sum\limits_{j=1}^{2^n}\omega^{k_j}\overline{\omega^{t_j}}\right)i,\\
		\overline{\scal{\mathbf{A}}{\mathbf{F}+\mathbf{G}}}&=-\frac{1}{2}\mathrm{Im}\left(\sum\limits_{j=1}^{2^n}\omega^{k_j}\overline{\omega^{t_j}}\right)i,
	\end{align*}	
	\begin{align*}
		\scal{\mathbf{B}}{\mathbf{F}-\mathbf{G}}&=\sum\limits_{j=1}^{2^n}\mathbf{B}_j\overline{\big(\mathbf{F}-\mathbf{G}\big)}_j=\frac{1}{4}\sum\limits_{j=1}^{2^n}\big(\omega^{l_j}-\omega^{r_j}\big)\big(\overline{\omega^{l_j}}+\overline{\omega^{r_j}}\big)\\
		&=\frac{1}{4}\sum\limits_{j=1}^{2^n}\big(1-1+\omega^{l_j}\overline{\omega^{r_j}}-\omega^{r_j}\overline{\omega^{l_j}}\big)=\frac{1}{2}\mathrm{Im}\left(\sum\limits_{j=1}^{2^n}\omega^{l_j}\overline{\omega^{r_j}}\right)i,\\
		\overline{\scal{\mathbf{B}}{\mathbf{F}-\mathbf{G}}}&=-\frac{1}{2}\mathrm{Im}\left(\sum\limits_{j=1}^{2^n}\omega^{l_j}\overline{\omega^{r_j}}\right)i,
	\end{align*}
	therefore,
	both~$(\ref{equation:the first inner product})$ and~$(\ref{equation:the second inner product})$ are zero numbers.
\end{proof}

\section{Properties of self-dual gbent function}\label{section:Properties of self-dual gbent function}

\subsection{Upper bound for the number of self-dual gbent functions}

Let $2^{h-1}<q\leqslant2^h$. For any $f\in\mathcal{GF}_n^q$ it is possible to associate a unique sequence of Boolean functions $a_0,a_1,\ldots,a_{h-1}\in\mathcal{F}_n$ such that~\cite{Generalized_bent}
\begin{equation*}
	f(x)=a_0(x)+2a_1(x)+\ldots+2^{h-1}a_{h-1}(x),\ \ x\in\mathbb{F}_2^n.
\end{equation*}
In paper~\cite{Hodzic} it was proved that for the case $q=2^k$ and even $n$, provided that $f$ is gbent its dual gbent $\widetilde{f}$ has the following form
\begin{equation*}
	\widetilde{f}(x)=b_0(x)+2b_1(x)+\ldots+2^{h-1}b_{h-1}(x),\ \ x\in\mathbb{F}_2^n,
\end{equation*}
where $b_{k-1}=\widetilde{b_{k-1}}$ and the dual of  $b_j=\widetilde{b_{k-1}}\oplus\big(\widetilde{b_{k-1}\oplus b_j}\big)$. If $f$ is self-dual gbent then $b_{k-1}$ is self-dual Boolean function and for $j=0,1,\ldots,k-1$ Boolean functions ${\big(b_{k-1}\oplus b_j\big)}$ are self-dual. It follows the statement
\begin{proposition}
	It holds $\big\lvert\mathrm{SB}_{2^k}^+(n)\big\rvert\leqslant\big\lvert\mathrm{SB}_2^+(n)\big\rvert^k$.
\end{proposition}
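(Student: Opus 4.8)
The plan is to exhibit an injection from $\mathrm{SB}_{2^k}^+(n)$ into the $k$-fold Cartesian product $\mathrm{SB}_2^+(n)^k$, which immediately yields the cardinality bound. The raw material is the structural description of the dual quoted just above from~\cite{Hodzic}: if $f=a_0+2a_1+\ldots+2^{k-1}a_{k-1}\in\mathcal{GF}_n^{2^k}$ is gbent with bit-decomposition $a_0,\ldots,a_{k-1}\in\mathcal{F}_n$, then its dual has bit-decomposition $b_0,\ldots,b_{k-1}$ with $b_{k-1}=\widetilde{a_{k-1}}$ and $b_j=\widetilde{a_{k-1}}\oplus\bigl(\widetilde{a_{k-1}\oplus a_j}\bigr)$ for $j=0,1,\ldots,k-2$. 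Imposing self-duality $f=\widetilde{f}$ forces the bit functions to coincide, i.e. $a_{k-1}=b_{k-1}$ and $a_j=b_j$ for all $j$.

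First I would read off from $a_{k-1}=b_{k-1}=\widetilde{a_{k-1}}$ that $a_{k-1}$ is a self-dual Boolean bent function, so $a_{k-1}\in\mathrm{SB}_2^+(n)$. Next, for each $j\in\{0,1,\ldots,k-2\}$, the equation $a_j=\widetilde{a_{k-1}}\oplus\bigl(\widetilde{a_{k-1}\oplus a_j}\bigr)=a_{k-1}\oplus\bigl(\widetilde{a_{k-1}\oplus a_j}\bigr)$ rearranges (adding $a_{k-1}$ to both sides over $\mathbb{F}_2$) to $a_{k-1}\oplus a_j=\widetilde{a_{k-1}\oplus a_j}$, which says exactly that $a_{k-1}\oplus a_j$ is self-dual bent. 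Hence the map sending $f$ to the tuple $\bigl(a_{k-1},\,a_{k-1}\oplus a_0,\,a_{k-1}\oplus a_1,\,\ldots,\,a_{k-1}\oplus a_{k-2}\bigr)$ lands in $\mathrm{SB}_2^+(n)^k$. It is injective because from the tuple one recovers $a_{k-1}$ directly and then $a_j=a_{k-1}\oplus(a_{k-1}\oplus a_j)$ for $j\le k-2$, hence all bits of $f$. Therefore $\bigl\lvert\mathrm{SB}_{2^k}^+(n)\bigr\rvert\le\bigl\lvert\mathrm{SB}_2^+(n)\bigr\rvert^k$.

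The step I expect to need the most care is justifying that equating $f$ with $\widetilde f$ as elements of $\mathcal{GF}_n^{2^k}$ really does force the bit-by-bit identification $a_j=b_j$: since the decomposition $f=\sum_{j=0}^{k-1}2^j a_j$ with $a_j\in\mathcal{F}_n$ is \emph{unique} (as recalled from~\cite{Generalized_bent}), and $b_0,\ldots,b_{k-1}$ is precisely the analogous unique bit-decomposition of $\widetilde f$, the equality $f=\widetilde f$ gives $a_j=b_j$ for every $j$ pointwise on $\mathbb{F}_2^n$. One should also note that this uses $2^{h-1}<q\le 2^h$ with $q=2^k$, so $h=k$ and the decomposition has exactly $k$ bits, matching the statement; and that the cited result from~\cite{Hodzic} is for even $n$, which is the relevant range since gbent functions with the regularity needed for duals here occur for even $n$. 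No genuinely hard estimate is involved — the content is entirely in organizing the algebraic bookkeeping of the bit functions and invoking uniqueness of the $2$-adic-style decomposition.
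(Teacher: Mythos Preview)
Your argument is correct and follows exactly the approach the paper sketches: from the bit-decomposition of the dual cited from~\cite{Hodzic}, self-duality forces $a_{k-1}$ and each $a_{k-1}\oplus a_j$ (for $j\le k-2$) to lie in $\mathrm{SB}_2^+(n)$, and the resulting map into $\mathrm{SB}_2^+(n)^k$ is injective by uniqueness of the bit-decomposition. The paper presents this only as a one-line remark preceding the proposition, so your write-up is in fact more careful and explicit than the original.
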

Note that this bound is consistent with the results from the work~\cite{Decomposition}.

\subsection{Affinity of self-dual gbent function}\label{section:Affinity of self-dual gbent function}

In paper~\cite{Singh} for the case when $q$ is divisible by $4$, necessary and sufficient conditions for the bentness of generalized Boolean functions of the form 
\begin{equation*}
	f(x)=\sum\limits_{i=1}^n\lambda_ix_i+\lambda_0,
\end{equation*}
where $\lambda_0,\lambda_1,\ldots,\lambda_n\in\mathbb{Z}_q$, were obtained. Functions from this class are referred to as {\it affine} functions.

It is well known that Boolean bent function and, as a consequence, self-dual Boolean bent function can not be affine. The next result shows the non-existence of self-dual gbent functions within the class of affine functions.
\begin{theorem}
	There are no self-dual generalized bent functions in $n$ variables of the form
	\begin{equation*}
		f(x)=\sum\limits_{i=1}^n\lambda_ix_i+\lambda_0,
	\end{equation*}
	where $\lambda_0,\lambda_1,\ldots,\lambda_n\in\mathbb{Z}_q$.
\end{theorem}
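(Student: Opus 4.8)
The plan is to compute the generalized Walsh--Hadamard transform of an affine function $f(x)=\sum_{i=1}^n\lambda_ix_i+\lambda_0$ explicitly and then confront the resulting formula with the requirement of self-duality. First I would write
\begin{equation*}
	H_f(y)=\omega^{\lambda_0}\sum_{x\in\mathbb{F}_2^n}\omega^{\sum_{i=1}^n\lambda_ix_i}(-1)^{\langle x,y\rangle}
	=\omega^{\lambda_0}\prod_{i=1}^n\Big(1+\omega^{\lambda_i}(-1)^{y_i}\Big),
\end{equation*}
since the sum factorizes coordinatewise. For $f$ to be gbent we need $|H_f(y)|=2^{n/2}$ for every $y$, i.e. $\prod_{i=1}^n|1+\omega^{\lambda_i}(-1)^{y_i}|=2^{n/2}$ for all $y\in\mathbb{F}_2^n$. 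Each factor $|1+\omega^{\lambda_i}(-1)^{y_i}|$ lies in $[0,2]$, and taking the product over $y$ ranging over all of $\mathbb{F}_2^n$ (or just comparing the two values $y_i=0$ and $y_i=1$ while fixing the other coordinates) forces $|1+\omega^{\lambda_i}|=|1-\omega^{\lambda_i}|$ for each $i$, which happens exactly when $\omega^{\lambda_i}$ is purely imaginary, i.e. $q\equiv0\pmod 4$ and $\lambda_i\in\{q/4,3q/4\}$. In that case each factor has modulus $\sqrt2$, so $|H_f(y)|=2^{n/2}$ automatically; this recovers the known bentness criterion from~\cite{Singh}.

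Next I would pin down the dual. With $\lambda_i=q/4$ or $3q/4$ write $\omega^{\lambda_i}=\varepsilon_i\, i$ with $\varepsilon_i\in\{+1,-1\}$ (here $i$ is the imaginary unit). Then $1+\omega^{\lambda_i}(-1)^{y_i}=1+\varepsilon_i(-1)^{y_i} i=\sqrt2\,\omega^{(q/8)(1-2\varepsilon_i(-1)^{y_i})\cdot(\text{sign})}$ — more cleanly, $1+\varepsilon_i(-1)^{y_i}i$ has modulus $\sqrt2$ and argument $\pm\pi/4$ depending on the sign $\varepsilon_i(-1)^{y_i}$, so it equals $\sqrt2\,e^{\pm i\pi/4}$. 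Collecting the product, $H_f(y)=2^{n/2}\omega^{\lambda_0}\,e^{i\pi\sigma(y)/4}$ where $\sigma(y)$ is an integer-valued affine-type function of $y$; in particular $\widetilde f(y)$, if it exists, is an affine function of $y$ of the same shape, $\widetilde f(y)=\sum_{i=1}^n\mu_i y_i+\mu_0$ with $\mu_i\in\{q/4,3q/4\}$ and $\mu_0$ determined by $\lambda_0$ and the $\lambda_i$. The key point for the contradiction is the value of the constant term: evaluating $H_f(\bm 0)=2^{n/2}\omega^{\widetilde f(\bm 0)}=2^{n/2}\omega^{\mu_0}$ gives $\omega^{\mu_0}=\omega^{\lambda_0}\prod_{i=1}^n(1+\omega^{\lambda_i})/2^{n/2}=\omega^{\lambda_0}\prod_{i=1}^n e^{i\pi\varepsilon_i/4}$, so $\mu_0=\lambda_0+(q/8)\sum_{i=1}^n\varepsilon_i \pmod q$, an honest shift by a nonzero amount in general.

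Finally I would impose $f=\widetilde f$, i.e. $\lambda_0=\mu_0$ and $\lambda_i=\mu_i$ for all $i$. The equations $\lambda_i=\mu_i$ relate the sign pattern $(\varepsilon_i)$ of $f$ to that of $\widetilde f$ and can be satisfied; the real obstruction is the constant-term equation $\lambda_0=\lambda_0+(q/8)\sum_{i=1}^n\varepsilon_i\pmod q$, which forces $(q/8)\sum_{i=1}^n\varepsilon_i\equiv0\pmod q$, i.e. $\sum_{i=1}^n\varepsilon_i\equiv0\pmod 8$. Combined with the self-duality constraint on the $\mu_i$ (which, after working out $\mu_i$ in terms of the $\varepsilon_j$, typically forces all $\varepsilon_i$ equal or some other rigid pattern), this yields a numerical incompatibility: the two conditions cannot hold simultaneously for any choice of signs, giving the nonexistence. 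I expect the main obstacle to be the bookkeeping in the second step — correctly extracting $\widetilde f(y)$ as an explicit affine function (getting the argument of each factor $1+\varepsilon_i(-1)^{y_i}i$ right, and summing the $\pm\pi/4$ contributions modulo $2\pi$) — since everything downstream is a clean modular arithmetic contradiction once that formula is in hand. An alternative, possibly cleaner route avoiding the argument computation: use that a self-dual gbent function must have sign vector in $\mathrm{Ker}(\mathcal H_n-I_{2^n})$ and simply check that no affine sign vector $\omega^{\lambda_0}\big(\omega^{\langle\lambda,x\rangle}\big)_{x}$ is fixed by $\mathcal H_n$, by testing one coordinate (e.g. $x=\bm 0$ versus the sum of all coordinates) against the fixed-point equation.
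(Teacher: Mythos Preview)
Your overall strategy---factor $H_f(y)$, compute $\widetilde f$ explicitly as an affine function, and compare coefficients---is viable and different from the paper's, but your diagnosis of where the contradiction lies is wrong, and this is a genuine gap. You assert that ``the equations $\lambda_i=\mu_i$ \ldots\ can be satisfied; the real obstruction is the constant-term equation.'' In fact the opposite is true. Once you have established $\omega^{\lambda_i}=\varepsilon_i\,i$ (so $\lambda_i\equiv \varepsilon_i\,q/4\pmod q$), the factor $1+\varepsilon_i(-1)^{y_i}i=\sqrt{2}\,e^{i\pi\varepsilon_i(-1)^{y_i}/4}$ contributes to $\widetilde f(y)$ the term $\tfrac{q}{8}\varepsilon_i(-1)^{y_i}=\tfrac{q}{8}\varepsilon_i-\tfrac{q}{4}\varepsilon_i y_i$, so $\mu_i\equiv -\tfrac{q}{4}\varepsilon_i\equiv -\lambda_i\pmod q$. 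Since $\lambda_i\in\{q/4,3q/4\}$, one has $2\lambda_i\not\equiv 0\pmod q$, hence $\mu_i\neq\lambda_i$ for every $i$. The contradiction is therefore immediate at the level of the linear coefficients; the constant-term condition $\sum_i\varepsilon_i\equiv 0\pmod 8$ and the vague ``numerical incompatibility'' you invoke are unnecessary (and your formula for $\mu_0$ uses $q/8$, which need not be an integer when $q\equiv 4\pmod 8$).

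The paper's argument is closer to the ``alternative route'' you sketch at the end, and is cleaner because it never needs the bentness criterion or the explicit dual. It simply writes down the self-duality equation $H_f(y)=2^{n/2}\omega^{f(y)}$ for two values of $y$ that differ only in the last coordinate $y_n$, divides the two equations (the product $P_{n-1}(\widehat{y})$ over the first $n-1$ factors cancels, and it is nonzero since $f$ is gbent), and obtains
\[
\frac{1-\omega^{\lambda_n}}{1+\omega^{\lambda_n}}=\omega^{\lambda_n},
\]
i.e.\ $(\omega^{\lambda_n})^2+2\omega^{\lambda_n}-1=0$, whose roots $-1\pm\sqrt{2}$ have modulus $\neq 1$. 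This single-variable reduction avoids all the sign bookkeeping you anticipated as the ``main obstacle.'' Your alternative route, once made precise (compare the fixed-point equation at $y=\bm 0$ and $y=e_n$), is exactly this.
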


\begin{proof}
	Let $f$ be an affine gbent function in $n$ variables (for the case $q$ not divisible by $4$ if such exists, otherwise the result follows), namely
	\begin{equation*}
		f(x)=\sum\limits_{i=1}^n\lambda_ix_i+\lambda_0,\ \ x\in\mathbb{F}_2^n,
	\end{equation*}
	where $\lambda_0,\lambda_1,\ldots,\lambda_n\in\mathbb{Z}_q$. It is self-dual if and only if
	\begin{align*}
		H_f(y)&=\sum\limits_{x\in\mathbb{F}_2^n}\omega^{f(x)}(-1)^{\langle x,y\rangle}=\omega^{\lambda_0}\sum\limits_{x\in\mathbb{F}_2^n}\omega^{\sum\limits_{i=1}^n\lambda_ix_i+\frac{q}{2}\langle x,y\rangle}\\
		&=\omega^{\lambda_0}\prod\limits_{i=1}^n\sum\limits_{x_i\in\mathbb{F}_2}\omega^{\lambda_ix_i+\frac{q}{2}y_ix_i}=\omega^{\lambda_0}\prod\limits_{i=1}^n\left(1+\omega^{\frac{q}{2}y_i+\lambda_i}\right),
	\end{align*}
	for any $y\in\mathbb{F}_2^n$.
	
	For every $y\in\mathbb{F}_2^n$ denote 
	\begin{align*}
		\widehat{y}&=\left(y_1,y_2,\ldots,y_{n-1}\right)\in\mathbb{F}_2^{n-1},\\
		P_{n-1}\left(\widehat{y}\right)&=\left(1+\omega^{\frac{q}{2}y_1+\lambda_1}\right)\left(1+\omega^{\frac{q}{2}y_2+\lambda_2}\right)\cdot\ldots\cdot\left(1+\omega^{\frac{q}{2}y_{n-1}+\lambda_{n-1}}\right),\\
		a_{n-1}\left(\widehat{y}\right)&=\lambda_1y_1+\lambda_2y_2+\ldots+\lambda_{n-1}y_{n-1}.
	\end{align*}
	Then for any $y\in\mathbb{F}_2^n$ such that $y_n=0$ it holds
	\begin{equation*}
		P_{n-1}\left(\widehat{y}\right)\left(1+\omega^{\lambda_n}\right)=2^{n/2}\omega^{a_{n-1}\left(\widehat{y}\right)},
	\end{equation*}
	and for any $y\in\mathbb{F}_2^n$ such that $y_n=1$:
	\begin{equation*}
		P_{n-1}\left(\widehat{y}\right)\left(1+\omega^{\frac{q}{2}+\lambda_n}\right)=2^{n/2}\omega^{a_{n-1}\left(\widehat{y}\right)+\lambda_n}.
	\end{equation*}
	
	So, for any $\widehat{y}\in\mathbb{F}_2^{n-1}$ consider the system
	\begin{equation*}
		\begin{cases}
			P_{n-1}\left(\widehat{y}\right)\left(1+\omega^{\lambda_n}\right)=2^{n/2}\omega^{a_{n-1}\left(\widehat{y}\right)},\\
			P_{n-1}\left(\widehat{y}\right)\left(1-\omega^{\lambda_n}\right)=2^{n/2}\omega^{a_{n-1}\left(\widehat{y}\right)+\lambda_n}.
		\end{cases}
	\end{equation*}
	
	It is equivalent to
	\begin{equation*}
		\begin{cases}
			P_{n-1}\left(\widehat{y}\right)\left(1+\omega^{\lambda_n}\right)=2^{n/2}\omega^{a_{n-1}\left(\widehat{y}\right)},\\
			P_{n-1}\left(\widehat{y}\right)\left(1-\omega^{\lambda_n}\right)=P_{n-1}\left(\widehat{y}\right)\left(1+\omega^{\lambda_n}\right)\cdot\omega^{\lambda_n}.
		\end{cases}
	\end{equation*}
	
	Thus, we obtain the relation
	\begin{equation*}
		P_{n-1}\left(\widehat{y}\right)\left(1-\omega^{\lambda_n}\right)=P_{n-1}\left(\widehat{y}\right)\left(1+\omega^{\lambda_n}\right)\cdot\omega^{\lambda_n},
	\end{equation*}
	and can note that $P_{n-1}\left(\widehat{y}\right)\ne0$ since for any $y\in\mathbb{F}_2^n$ we have
	\begin{equation*}
		H_f(y)=\omega^{\lambda_0}P_{n-1}\left(\widehat{y}\right)\left(1+\omega^{\frac{q}{2}y_n+\lambda_n}\right),
	\end{equation*}
	and~$f$ is gbent that is $1-\omega^{\lambda_n}=\omega^{\lambda_n}+\left(\omega^{\lambda_n}\right)^2$. The solutions of this equation are~$\left(-1\pm\sqrt{2}\right)$. The norm of every of these numbers is not $1$ therefore $\omega^{\lambda_n}$ can not be a solution.
\end{proof}

\subsection{Self-dual gbent functions symmetric with respect to two variables}\label{section:Self-dual gbent functions symmetric with respect to two variables}

A generalized Boolean function $h\in\mathcal{GF}_{n+2}^q$ is said to be symmetric with respect to two variables $y$ and $z$ if there exist functions $f,g,s\in\mathcal{GF}_n^q$ such that
\begin{equation}\label{equation_symmetric}
	h(z,y,x)=f(x)+(y\oplus z)g(x)+yzs(x),\ \ y,z\in\mathbb{F}_2,x\in\mathbb{F}_2^n.
\end{equation}

In paper~\cite{Generalized_bent} it was proved that a function of such form is gbent if and only if the functions $f,f+g$ are gbent and $s(x)=q/2$, $x\in\mathbb{F}_2^n$. We study the conditions for self-duality of functions of such form.
\begin{theorem}
	Let $h$ be a gbent function of the form~$(\ref{equation_symmetric})$. Then $h$ is self-dual if and only if~$f$ is regular gbent, $g=\widetilde{f}+(q-1)f$, and $s(x)=q/2$, $x\in\mathbb{F}_2^n$.
\end{theorem}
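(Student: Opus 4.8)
The plan is to recognise the sign function of $h$ as one of the concatenations appearing in Proposition~\ref{proposition:Iterative} and to read the equivalence off from there. First I would invoke the gbentness criterion for functions of the form~(\ref{equation_symmetric}) from~\cite{Generalized_bent}: since $h$ is gbent, necessarily $s(x)=q/2$ for all $x\in\mathbb{F}_2^n$ and both $f$ and $f+g$ are gbent. Substituting the four values of $(z,y)\in\mathbb{F}_2^2$ into~(\ref{equation_symmetric}) and using $\omega^{q/2}=-1$, the length-$2^{n+2}$ sign function splits into four length-$2^n$ blocks as $\omega^{h}=(F,FG,FG,-F)$, where $F=\omega^{f}$ and $G=\omega^{g}$.

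Next I would write the self-duality condition $\mathcal{H}_{n+2}\,\omega^{h}=\omega^{h}$ in block form, using the factorisation of $\mathcal{H}_{n+2}$ into $\tfrac12$ times the $4\times4$ matrix built from $\mathcal{H}_n$ that is displayed in the proof of Proposition~\ref{proposition:Iterative}. A direct multiplication collapses the left-hand side to $\big(\mathcal{H}_n(FG),\,\mathcal{H}_nF,\,\mathcal{H}_nF,\,-\mathcal{H}_n(FG)\big)$, so $h$ is self-dual if and only if $\mathcal{H}_nF=FG$ together with $\mathcal{H}_n(FG)=F$. The first equation says $H_f(y)=2^{n/2}\omega^{f(y)+g(y)}$ for every $y$; because the entries of $FG$ have modulus $1$, this is exactly the assertion that $f$ is regular with $\widetilde f=f+g$ in $\mathbb{Z}_q$, i.e. $g=\widetilde f+(q-1)f$. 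The second equation then reads $\mathcal{H}_n\omega^{\widetilde f}=\omega^{f}$, i.e. $\widetilde{\widetilde f}=f$, which holds automatically for every regular gbent function by a one-line Fourier-inversion argument (the gbent analogue of $\widetilde{\widetilde f}=f$ for Boolean bent functions); hence it imposes nothing new once the first equation holds. Reading the chain of equivalences in both directions, and recalling $s\equiv q/2$, yields the stated characterisation; for the sufficiency one may alternatively observe that under the hypotheses $\omega^{h}=(F,\widetilde F,\widetilde F,-F)$ with $\widetilde F=\omega^{\widetilde f}$, which is a self-dual gbent sign function by Proposition~\ref{proposition:Iterative}(1), so in particular $h$ is gbent and self-dual.

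The whole argument is essentially bookkeeping: the only points requiring care are matching the concatenation order $h(0,0,\cdot),h(0,1,\cdot),h(1,0,\cdot),h(1,1,\cdot)$ with the value pattern $f,f+g,f+g,f+q/2$, and noticing that the extra relation $\mathcal{H}_n(FG)=F$ is redundant because duality is an involution on regular gbent functions. I do not expect a genuine obstacle; the content of the theorem is already packaged in Proposition~\ref{proposition:Iterative} and the gbentness criterion of~\cite{Generalized_bent}.
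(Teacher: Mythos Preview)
Your proposal is correct and follows essentially the same route as the paper: both arguments write $\omega^{h}=(F,FG,FG,-F)$, apply the $4\times4$ block form of $\mathcal{H}_{n+2}$, and read off $\widetilde{f}=f+g$. You are in fact slightly more careful than the paper in two places---you derive the regularity of $f$ from the equation $\mathcal{H}_nF=FG$ rather than assuming it up front, and you explicitly dispose of the second block equation $\mathcal{H}_n(FG)=F$ via the involution $\widetilde{\widetilde f}=f$---but these are refinements of the same computation, not a different method.
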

\begin{proof}
	Let $F,FG$ be sign functions of regular gbent functions $f,f+g$. It is clear that
	\begin{equation*}
		\omega^h=
		\begin{pmatrix}
			F\\
			FG\\
			FG\\
			-F
		\end{pmatrix}.
	\end{equation*}
	Then the function $h$ is self-dual gbent if and only if
	\begin{equation*}
		\omega^{\widetilde{h}}=\mathcal{H}_{n+2}\omega^h=
		\frac{1}{2}\begin{pmatrix}
			\mathcal{H}_n& \mathcal{H}_n& \mathcal{H}_n& \mathcal{H}_n\\
			\mathcal{H}_n& -\mathcal{H}_n& \mathcal{H}_n& -\mathcal{H}_n\\
			\mathcal{H}_n& \mathcal{H}_n & -\mathcal{H}_n& -\mathcal{H}_n\\
			\mathcal{H}_n& -\mathcal{H}_n& -\mathcal{H}_n& \mathcal{H}_n
		\end{pmatrix}
		\begin{pmatrix}
			F\\
			FG\\
			FG\\
			-F
		\end{pmatrix}=
		\begin{pmatrix}
			F\\
			FG\\
			FG\\
			-F
		\end{pmatrix}
		=\omega^h.
	\end{equation*}
	
	Consider the system
	\begin{align*}
		\omega^{\widetilde{h}}&=
		\frac{1}{2}\begin{pmatrix}
			\mathcal{H}_n& \mathcal{H}_n& \mathcal{H}_n& \mathcal{H}_n\\
			\mathcal{H}_n& -\mathcal{H}_n& \mathcal{H}_n& -\mathcal{H}_n\\
			\mathcal{H}_n& \mathcal{H}_n & -\mathcal{H}_n& -\mathcal{H}_n\\
			\mathcal{H}_n& -\mathcal{H}_n& -\mathcal{H}_n& \mathcal{H}_n
		\end{pmatrix}
		\begin{pmatrix}
			F\\
			FG\\
			FG\\
			-F
		\end{pmatrix}
		\\
		&=
		\frac{1}{2}\begin{pmatrix}
			\mathcal{H}_nF+\mathcal{H}_n\left(FG\right)+\mathcal{H}_n\left(FG\right)+\mathcal{H}_nF\\
			\mathcal{H}_nF-\mathcal{H}_n\left(FG\right)+\mathcal{H}_n\left(FG\right)-\mathcal{H}_nF\\
			\mathcal{H}_nF+\mathcal{H}_n\left(FG\right)-\mathcal{H}_n\left(FG\right)-\mathcal{H}_nF\\
			\mathcal{H}_nF-\mathcal{H}_n\left(FG\right)-\mathcal{H}_n\left(FG\right)+\mathcal{H}_nF
		\end{pmatrix}	
		\\
		&=
		\frac{1}{2}\begin{pmatrix}
			\widetilde{F}-\widetilde{F}+2\widetilde{FG}\\
			2\widetilde{F}-\widetilde{FG}+\widetilde{FG}\\
			2\widetilde{F}+\widetilde{FG}-\widetilde{FG}\\
			-2\widetilde{FG}
		\end{pmatrix}
		=
		\begin{pmatrix}
			\widetilde{FG}\\
			\widetilde{F}\\
			\widetilde{F}\\
			-\widetilde{FG}
		\end{pmatrix}.
	\end{align*}
	Writing
	\begin{equation*}
		\begin{pmatrix}
			\widetilde{FG}\\
			\widetilde{F}\\
			\widetilde{F}\\
			-\widetilde{FG}
		\end{pmatrix}
		=
		\begin{pmatrix}
			F\\
			FG\\
			FG\\
			-F
		\end{pmatrix},
	\end{equation*}
	we see that $\widetilde{f}=f+g$, or, equivalently, $g=\widetilde{f}+(q-1)f$.
	
	Thus, we have
	\begin{equation*}
		h\left(z,y,x\right)=f(x)+\left(z\oplus y\right)\left[\widetilde{f}(x)+(q-1)f(x)\right]+\frac{q}{2}zy.
	\end{equation*}
\end{proof}

\section{Symmetries}\label{section:Symmetries}

In paper~\cite{Feulner} (see also~\cite{Self-dual}) it was shown that the mapping 
\begin{equation}\label{equation:preserve self-duality Boolean}
	f(x)\longrightarrow f\left(L\left(x\oplus c\right)\right)\oplus\left\langle c,x\right\rangle\oplus d,
\end{equation}
where $L\in\mathcal{O}_n$, $c\in\mathbb{F}_2^n$, $\mathrm{wt}(c)$ is even, $d\in\mathbb{F}_2$, preserves self-duality of a bent function. The group which consists of mappings of such form is called an {\it extended orthogonal group} and denoted by~$\overline{\mathcal{O}}_n$~\cite{Feulner}. It is known that this group is a subgroup of $\mathrm{GL}\left(n+2,\mathbb{F}_2\right)$~\cite{Feulner}.

In paper~\cite{Kutsenko_CC} known results were generalized within isometric mappings from the set of all mappings of all Boolean functions in $n\geqslant 4$ variables into itself, which preserve the Hamming distance. Namely it was proved the group of automorphisms of self-dual Boolean bent functions coincides with the extended orthogonal group.

In paper~\cite{Quaternary} it was proved that the mappings of the form
\begin{equation*}
	f(x)\longrightarrow f\left(Lx\right)+d,
\end{equation*}
where $L\in\mathcal{O}_n$, $d\in\mathbb{Z}_4$, preserve self-duality of a quaternary self-dual gbent function.

In current work we set the form~(\ref{equation:preserve self-duality Boolean}) for generalized case. The following result provides the construction of mappings of such form preserving the (anti-)self-duality of a Boolean function.
\begin{theorem}\label{theorem:preserve self-duality}
	The mapping of the set of all generalized Boolean functions in $n$ variables to itself of the form
	\begin{equation*}
		f(x)\longrightarrow f\left(L\left(x\oplus c\right)\right)+\frac{q}{2}\langle c,x\rangle+d,
	\end{equation*}
	where $L\in\mathcal{O}_n$, $c\in\mathbb{F}_2^n$, $\mathrm{wt}(c)$ is even, $d\in\mathbb{Z}_q$, preserves (anti-)self-duality of a gbent function.
\end{theorem}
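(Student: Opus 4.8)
The plan is to compute the Walsh--Hadamard transform of the transformed function $g(x) = f(L(x\oplus c)) + \frac{q}{2}\langle c,x\rangle + d$ directly from the definition, assuming $f$ is a regular gbent function with dual $\widetilde f$, and show that the dual of $g$ is again obtained from $\widetilde f$ by the \emph{same} transformation (with the same $L$, $c$, and a suitably adjusted additive constant). Once that is established, if $f = \widetilde f$ (self-dual) then $g = \widetilde g$, and if $f = \widetilde f + q/2$ (anti-self-dual) then $g = \widetilde g + q/2$, which is exactly the claim. So the whole statement reduces to a transformation law for the dual under the map in the theorem.

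First I would write
\begin{equation*}
	H_g(y) = \sum_{x\in\mathbb{F}_2^n}\omega^{f(L(x\oplus c))}\,\omega^{\frac{q}{2}\langle c,x\rangle + d}(-1)^{\langle x,y\rangle}
	= \omega^{d}\sum_{x\in\mathbb{F}_2^n}\omega^{f(L(x\oplus c))}(-1)^{\langle c,x\rangle + \langle x,y\rangle},
\end{equation*}
using $\omega^{q/2} = -1$ to convert the $\frac{q}{2}\langle c,x\rangle$ term into a sign. Then I substitute $u = L(x\oplus c)$, so $x = L^{-1}u \oplus c = L^{\mathrm T}u \oplus c$ since $L\in\mathcal{O}_n$ means $L^{-1} = L^{\mathrm T}$. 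Under this substitution $\langle c,x\rangle = \langle c, L^{\mathrm T}u\rangle \oplus \langle c,c\rangle$ and $\langle x,y\rangle = \langle L^{\mathrm T}u,y\rangle \oplus \langle c,y\rangle = \langle u, Ly\rangle \oplus \langle c,y\rangle$, while $\langle c,L^{\mathrm T}u\rangle = \langle Lc, u\rangle$. Collecting, the exponent of $(-1)$ becomes $\langle u, Lc \oplus Ly\rangle \oplus \langle c,c\rangle \oplus \langle c,y\rangle$, so
\begin{equation*}
	H_g(y) = \omega^{d}(-1)^{\langle c,c\rangle + \langle c,y\rangle}\sum_{u\in\mathbb{F}_2^n}\omega^{f(u)}(-1)^{\langle u, L(c\oplus y)\rangle}
	= \omega^{d}(-1)^{\langle c,c\rangle + \langle c,y\rangle}\,H_f\bigl(L(c\oplus y)\bigr).
\end{equation*}
Since $f$ is regular gbent, $H_f(L(c\oplus y)) = 2^{n/2}\omega^{\widetilde f(L(c\oplus y))} = 2^{n/2}\omega^{\widetilde f(L(y\oplus c))}$, and $\mathrm{wt}(c)$ even gives $\langle c,c\rangle = 0$, so the prefactor $(-1)^{\langle c,c\rangle + \langle c,y\rangle} = (-1)^{\langle c,y\rangle} = \omega^{\frac{q}{2}\langle c,y\rangle}$. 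Hence $H_g(y) = 2^{n/2}\,\omega^{\widetilde f(L(y\oplus c)) + \frac{q}{2}\langle c,y\rangle + d}$, which shows $g$ is regular with $\widetilde g(y) = \widetilde f(L(y\oplus c)) + \frac{q}{2}\langle c,y\rangle + d$ — precisely the image of $\widetilde f$ under the same transformation.

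The main obstacle is bookkeeping rather than anything deep: keeping track of which bilinear terms live in $\mathbb{F}_2$ versus $\mathbb{Z}_q$, and correctly using $\omega^{q/2} = -1$ to pass between the additive $\frac{q}{2}\langle c,x\rangle$ term and the $(-1)^{\langle c,x\rangle}$ sign, together with the orthogonality identity $L^{-1} = L^{\mathrm T}$ and the parity hypothesis $\mathrm{wt}(c)$ even (which is exactly what kills the $\langle c,c\rangle$ term and guarantees the transformed function still reproduces itself rather than its complement). After that, the conclusion for both the self-dual and anti-self-dual cases is immediate from $f = \widetilde f$ or $f = \widetilde f + q/2$ respectively, since the transformation is applied identically to both $f$ and $\widetilde f$ and commutes with adding the constant $q/2$.
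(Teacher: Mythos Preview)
Your proof is correct and follows essentially the same approach as the paper: a direct computation of $H_g(y)$ via the substitution $u=L(x\oplus c)$, using $L^{-1}=L^{\mathrm T}$ and the parity of $\mathrm{wt}(c)$ to eliminate $\langle c,c\rangle$, yielding $\widetilde g(y)=\widetilde f(L(y\oplus c))+\frac{q}{2}\langle c,y\rangle+d$. The only cosmetic difference is that you first derive the general transformation law for the dual of any regular gbent $f$ and then specialize, whereas the paper assumes $\widetilde f=f+\frac{q}{2}\varepsilon$ from the outset; the computations are otherwise line-for-line identical.
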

\begin{proof}
	Let $f\in\mathrm{SB}_q^+(n)\cup\mathrm{SB}_q^-(n)$ that is $\widetilde{f}=f+\frac{q}{2}\varepsilon$ for some $\varepsilon\in\mathbb{F}_2$. Consider a function $g(x)=f\left(L\left(x\oplus c\right)\right)+\frac{q}{2}\langle c,x\rangle+d$, where $L\in\mathcal{O}_n$, $c\in\mathbb{F}_2^n$, $\mathrm{wt}(c)$ is even, $d\in\mathbb{Z}_q$. Its generalized Walsh--Hadamard transform is
	\begin{align*}
		H_g(y)&=\sum\limits_{x\in\mathbb{F}_2^n}{\omega^{g(x)}(-1)^{\left\langle x,y\right\rangle}}=\sum\limits_{x\in\mathbb{F}_2^n}{\omega^{f\left(L\left(x\oplus c\right)\right)+\frac{q}{2}\left\langle c,x\right\rangle+d+\frac{q}{2}\left\langle x,y\right\rangle}}\\
		&=\omega^d\sum\limits_{x\in\mathbb{F}_2^n}{\omega^{\frac{q}{2}\left\langle x,y\oplus c\right\rangle+f\left(L\left(x\oplus c\right)\right)}}=\omega^d\sum\limits_{z\in\mathbb{F}_2^n}{\omega^{\frac{q}{2}\left\langle L^{-1}z\oplus c,y\oplus c\right\rangle+f\left(z\right)}}\\
		&=\omega^{d+\frac{q}{2}\left\langle c,y\right\rangle+\frac{q}{2}\left\langle c,c\right\rangle}\sum\limits_{z\in\mathbb{F}_2^n}{\omega^{\frac{q}{2}\left\langle z,L\left(y\oplus c\right)\right\rangle+f\left(z\right)}}\\
		&=\omega^{d+\frac{q}{2}\left\langle c,y\right\rangle}2^{n/2}\omega^{\widetilde{f}\left(L\left(y\oplus c\right)\right)}=2^{n/2}\omega^{f\left(L\left(y\oplus c\right)\right)+\frac{q}{2}\langle c,y\rangle+d+\frac{q}{2}\varepsilon}\\
		&=2^{n/2}\omega^{g(y)+\frac{q}{2}\varepsilon}=2^{n/2}\omega^{\widetilde{g}(y)},
	\end{align*}
	hence $\widetilde{g}(y)=g(y)+\frac{q}{2}\varepsilon$ for any $y\in\mathbb{F}_2^n$.
\end{proof}

By using the mappings of this form we can clarify, for instance, the classification of quaternary self-dual bent functions in~$4$ variables given in~\cite{Quaternary} and formed by~$8$ classes. Namely, the representatives with vectors of values~$(0330302132010110)$ and~$(3123231322030300)$ from the classes~$4$ and~$5$ respectively are related by the transformation
\begin{equation*}
	f(x)\longrightarrow f(L(x\oplus c))+\frac{q}{2}\langle c,x\rangle+d,
\end{equation*}
where
\begin{equation*}
	L=\begin{pmatrix}
		1&0&0&0\\
		0&1&0&0\\
		0&0&1&0\\
		0&0&0&1
	\end{pmatrix},\ \ c=(1001),\ \ d=3.
\end{equation*}
The representatives with vectors of values~$(2022220222020200)$ and~$(2123230332121210)$ from the classes~$2$ and~$7$ respectively are related by the transformation
\begin{equation*}
	f(x)\longrightarrow f(L(x\oplus c))+\frac{q}{2}\langle c,x\rangle+d,
\end{equation*}
where
\begin{equation*}
	L=\begin{pmatrix}
		0&1&0&0\\
		1&0&0&0\\
		0&0&1&0\\
		0&0&0&1
	\end{pmatrix},\ \ c=(0101),\ \ d=1.
\end{equation*}

Thus, the classification of quaternary self-dual bent functions in~$4$ variables is given in the Table~\ref{table:classification}.

By a slight change of the parameters mentioned in Theorem~\ref{theorem:preserve self-duality}, it is possible to obtain the class of mapping that define a bijection between self-dual and anti-self-dual gbent functions in~$n$ variables.
\begin{table}
	\caption{Classification of quaternary self-dual bent functions in~$4$ variables}
	\label{table:classification}       
	\begin{tabular}{ll}
		\hline\noalign{\smallskip}
		Representative from equivalence class & Size\\ 
		\noalign{\smallskip}
		$0220202022000000$ & $24$\\ \noalign{\smallskip}
		$2022220222020200$& $64$ \\ \noalign{\smallskip}
		$0330313133110110$& $48$ \\ \noalign{\smallskip}
		$0330302132010110$& $120$ \\ \noalign{\smallskip}
		$1321213122010100$& $96$\\ \noalign{\smallskip}
		$0220213023100000$& $48$\\ 
		\noalign{\smallskip}
		Number of quaternary self-dual bent functions in four variables& $400$\\
		\noalign{\smallskip}\hline
	\end{tabular}
\end{table}

\begin{proposition}
	The mapping of the set of all generalized Boolean functions in $n$ variables to itself of the form
	\begin{equation*}
		f(x)\longrightarrow f(\pi(x))+g(x),
	\end{equation*}
	with
	\begin{equation*}
		\pi(x)=L\left(x\oplus c\right),\ \
		g(x)=\frac{q}{2}\langle c,x\rangle+d,\ \ x\in\mathbb{F}_2^n,
	\end{equation*}
	where $L\in\mathcal{O}_n$, $c\in\mathbb{F}_2^n$, $\mathrm{wt}(c)$ is odd, $d\in\mathbb{Z}_q$, is a bijection between the sets $\mathrm{SB}_q^+(n)$ and $\mathrm{SB}_q^-(n)$.
\end{proposition}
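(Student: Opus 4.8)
The plan is to mimic the computation in Theorem~\ref{theorem:preserve self-duality} but track how the parity of $\mathrm{wt}(c)$ changes the outcome. First I would fix $f\in\mathrm{SB}_q^+(n)$, so $\widetilde{f}=f$, and set $g(x)=f(\pi(x))+\frac{q}{2}\langle c,x\rangle+d$ with $\pi(x)=L(x\oplus c)$, $L\in\mathcal{O}_n$, $\mathrm{wt}(c)$ odd. Computing the generalized Walsh--Hadamard transform exactly as in the proof of Theorem~\ref{theorem:preserve self-duality}, the substitution $z=L(x\oplus c)$ together with $L^{-1}=L^T$ and the identity $\langle c,c\rangle\equiv\mathrm{wt}(c)\pmod 2$ yields
\begin{equation*}
	H_g(y)=\omega^{d+\frac{q}{2}\langle c,y\rangle+\frac{q}{2}\langle c,c\rangle}\sum_{z\in\mathbb{F}_2^n}\omega^{\frac{q}{2}\langle z,L(y\oplus c)\rangle+f(z)}=2^{n/2}\omega^{g(y)+\frac{q}{2}\mathrm{wt}(c)}.
\end{equation*}
Since $\mathrm{wt}(c)$ is odd, $\frac{q}{2}\mathrm{wt}(c)\equiv\frac{q}{2}\pmod q$, so $\widetilde{g}=g+\frac{q}{2}$, i.e.\ $g\in\mathrm{SB}_q^-(n)$. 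The symmetric argument (starting from $f\in\mathrm{SB}_q^-(n)$) sends $g$ into $\mathrm{SB}_q^+(n)$, so the map carries $\mathrm{SB}_q^+(n)$ to $\mathrm{SB}_q^-(n)$ and vice versa.

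To finish, I would argue the map is a bijection. The cleanest route is to exhibit an explicit inverse: the transformation $g(x)\longrightarrow g(\pi^{-1}(x))+\frac{q}{2}\langle \pi^{-1}(x),?\rangle+d'$ — more concretely, one checks that composing two transformations of this type (with parameters $(L_1,c_1,d_1)$ and $(L_2,c_2,d_2)$) is again a transformation of the same shape, with $L=L_1L_2$, an affine-combination $c$ of $c_1$ and $L$-images of $c_2$, and a shifted constant; in particular taking $(L_2,c_2,d_2)=(L^{-1},L^{-1}c,d')$ for a suitable $d'$ recovers the identity map on $\mathcal{GF}_n^q$. Since the underlying affine substitution $x\mapsto L(x\oplus c)$ is already a bijection of $\mathbb{F}_2^n$ and the additive term $\frac{q}{2}\langle c,x\rangle+d$ is completely determined by the parameters, the whole assignment $f\mapsto f\circ\pi+g$ is invertible on $\mathcal{GF}_n^q$; restricting it to $\mathrm{SB}_q^+(n)$ and using the first paragraph (applied both ways) shows it restricts to a bijection $\mathrm{SB}_q^+(n)\to\mathrm{SB}_q^-(n)$.

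The only genuinely delicate point is the bookkeeping of the parity condition under composition: one must verify that when $\mathrm{wt}(c_1)$ and $\mathrm{wt}(c_2)$ are odd, the weight of the composed shift vector $c$ is even (so that composing two ``anti'' maps lands back among the self-duality-preserving maps of Theorem~\ref{theorem:preserve self-duality}), whereas composing one ``odd'' map with one ``even'' map again has odd shift-weight. This follows from $\langle L x, L y\rangle=\langle x,y\rangle$ for $L\in\mathcal{O}_n$ and a short modulo-$2$ weight count, but it is the step where sign/parity errors are easiest to make, so I would carry it out carefully rather than by analogy with the Boolean case.
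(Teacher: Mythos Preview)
Your proposal is correct and follows the same route as the paper: both rerun the Walsh--Hadamard computation from Theorem~\ref{theorem:preserve self-duality}, picking up the extra factor $\omega^{q/2}$ from $\langle c,c\rangle=1$ when $\mathrm{wt}(c)$ is odd. The paper's proof is in fact terser than yours---it records only the resulting identity $H_g(y)=2^{n/2}\omega^{g(y)+q/2}$ and leaves bijectivity implicit---so your discussion of the inverse map and the parity bookkeeping under composition, while correct, goes beyond what the paper actually supplies.
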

\begin{proof}
	Let $f\in\mathrm{SB}_q^+(n)\cup\mathrm{SB}_q^-(n)$ and $\widetilde{f}=f+\frac{q}{2}\varepsilon$ for some $\varepsilon\in\mathbb{F}_2$. One can show that for $g(x)=f\left(L\left(x\oplus c\right)\right)+\frac{q}{2}\langle c,x\rangle+d$, where $L\in\mathcal{O}_n$, $c\in\mathbb{F}_2^n$, $\mathrm{wt}(c)$ is odd,~$d\in\mathbb{Z}_q$, it holds $H_g(y)=2^{n/2}\omega^{\widetilde{g}(y)+q/2}$, $y\in\mathbb{F}_2^n$.
\end{proof}

From the existence of such bijections it follows that the cardinalities of the sets of self-dual and anti-self-dual gbent functions coincide.
\begin{corollary}
	It holds $\left\lvert\mathrm{SB}_q^+(n)\right\rvert=\left\lvert\mathrm{SB}_q^-(n)\right\rvert$.
\end{corollary}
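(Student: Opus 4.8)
The plan is to read off the equality of cardinalities directly from the bijection exhibited in the preceding Proposition. Write $\Phi$ for the map $f(x)\longmapsto f\big(L(x\oplus c)\big)+\tfrac{q}{2}\langle c,x\rangle+d$ with the parameters used there ($L\in\mathcal{O}_n$, $c\in\mathbb{F}_2^n$ with $\mathrm{wt}(c)$ odd, $d\in\mathbb{Z}_q$). First I would observe that $\Phi$ is a bijection of the whole set $\mathcal{GF}_n^q$ onto itself: it is the composition of precomposition with the affine permutation $x\mapsto L(x\oplus c)$ of $\mathbb{F}_2^n$ (invertible because $\mathcal{O}_n\subseteq\mathrm{GL}(n,\mathbb{F}_2)$) and the translation $h\mapsto h+\tfrac{q}{2}\langle c,\cdot\rangle+d$, and both operations are invertible on $\mathcal{GF}_n^q$. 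In particular $\Phi$ is injective on every subset.

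Next I would invoke the Proposition itself, which asserts that $\Phi$ carries $\mathrm{SB}_q^+(n)$ onto $\mathrm{SB}_q^-(n)$. Combined with injectivity, the restriction $\Phi|_{\mathrm{SB}_q^+(n)}\colon\mathrm{SB}_q^+(n)\to\mathrm{SB}_q^-(n)$ is a bijection. Since $\mathcal{GF}_n^q$ is finite, so are $\mathrm{SB}_q^+(n)$ and $\mathrm{SB}_q^-(n)$, and a bijection between finite sets forces $\big\lvert\mathrm{SB}_q^+(n)\big\rvert=\big\lvert\mathrm{SB}_q^-(n)\big\rvert$. If one prefers to avoid relying on surjectivity, it suffices to note that $\Phi$ embeds $\mathrm{SB}_q^+(n)$ into $\mathrm{SB}_q^-(n)$ while a map of the same shape embeds $\mathrm{SB}_q^-(n)$ into $\mathrm{SB}_q^+(n)$ — the parity condition on $\mathrm{wt}(c)$ is preserved because $L\in\mathcal{O}_n$ preserves Hamming weight modulo $2$ (from $L^{T}L=I_n$ one gets $\langle Lc,Lc\rangle=\langle c,c\rangle$) — and two opposite injections between finite sets already yield equality of cardinalities.

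There is no real obstacle remaining: all of the analytic content, namely the verification through the generalized Walsh--Hadamard transform that $\widetilde{\Phi(f)}=\Phi(f)+\tfrac{q}{2}(\varepsilon+1)$, so that $\Phi$ interchanges the self-dual and anti-self-dual cases, has already been carried out in the proof of the Proposition. The corollary is then a one-line consequence, the only point worth recording being the finiteness of the sets involved.
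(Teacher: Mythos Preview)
Your proposal is correct and follows essentially the same approach as the paper: the corollary is deduced directly from the bijection of the preceding Proposition, and the paper records it as an immediate consequence without further argument. Your additional remarks (explicit invertibility of $\Phi$ on $\mathcal{GF}_n^q$ and the alternative via opposite injections) are sound elaborations but not required beyond what the Proposition already asserts.
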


\section{Conclusion}\label{section:Conclusion}

In current paper self-dual generalized bent functions were explored. A group of primary and secondary constructions was presented. The general form of self-dual Maiorana--McFarland gbent functions and their metrical properties were studied. The non-existence of affine self-dual gbent functions was shown. We also gave the description of self-dual gbent functions symmetric with respect to two variables. The properties of sign functions of self-dual gbent functions were considered. 

It is interesting to find other symmetries, if any, distinct from the ones that were found in this work. It involves the study of the automorphisms gropus of the considered gbent functions with respect to Hamming or Lee metrics. The study of connection with self-dual Boolean functions also seems to be a promising task.




\end{document}